\begin{document}

\newcommand{\beq}{\begin{equation}}
\newcommand{\eeq}{\end{equation}}
\newcommand{\beqs}{\begin{equation*}}
\newcommand{\eeqs}{\end{equation*}}
\newcommand{\beqq}{\begin{equation}}
\newcommand{\eeqq}{\end{equation}}

\newcommand{\beqas}{\begin{eqnarray*}}
\newcommand{\eeqas}{\end{eqnarray*}}
 \newcommand{\beqa}{\begin{eqnarray}}
\newcommand{\eeqa}{\end{eqnarray}}

\newcommand{\bmuls}{\begin{multline*}}
\newcommand{\emuls}{\end{multline*}}
 \newcommand{\bmul}{\begin{multline}}
\newcommand{\emul}{\end{multline}}

\newcommand{\bs}{\boldsymbol}
\newcommand{\mbf}{\mathbf}
 
 \newcommand{\xa}{\xi(\alpha)}
 \newcommand{\xb}{\xi(\beta)}
 \newcommand{\laa}{L(\alpha,\alpha)}
 \newcommand{\lab}{L(\alpha,\beta)}
 \newcommand{\lng}{\langle}
 \newcommand{\rng}{\rangle}
 \newcommand {\res}{\text{Res} \, }
\newcommand{\ola}{\overline{\Lambda^\circ}}

\def\bof{\mathbf {f}} 
\def\bog{\mathbf {g}} 
\def\boG{\mathbf {G}} 
\def\boh{\mathbf {h}}

 \def\bC{\mathbb{C}}
  \def\bD{\mathbb{D}}
 \def\bR{\mathbb{R}}
 \def\bT{\mathbb{T}}
   \def\bZ{\mathbb{Z} }

  \def\cA{\mathcal{A}} 
  \def\cB{\mathcal{B}}  
   \def\cE{\mathcal{E}}
\def\cF{\mathcal{F}}
\def\cG{\mathcal{G}}
\def\cH{\mathcal{H}}
\def\cO{\mathcal{O}}
\def\cP{\mathcal{P}}

\def\bga{\bs{\gamma}}

\def\BT {Bargmann transform \ }
\def\FS {Fock space \ }
\def\HF {Hermite function \  } 
\def\HFS {Hermite functions \  }
\def\TFS {time-frequency shift}
\def\SIF{$\sigma$-function \ }
\def\const{\text{Const}\ }
\def\dist{\text{dist}\ }

\newcommand{\vvlrd}{L^2(\bR , \bC ^n)}
\newcommand{\rd}{\bR ^d}
\newcommand{\rdd}{\bR ^{2d}}
\newcommand{\inv}{^{-1}}
\newcommand{\lrd}{L^2(\rd )}
\def\lr{L^2(\bR)}
\newcommand{\tfs}{time-frequency shift}
 \def\cS{\mathcal{S}}
\def\cK{\mathcal{K}}
\newcommand{\fif}{if and only if}

\newtheorem{thm}{Theorem}[section]
\newtheorem{lemma}[thm]{Lemma}
\newtheorem{defin}{Definition}[section]
\newtheorem{prop}[thm]{Proposition}
\newtheorem{cor}[thm]{Corollary}


\begin{abstract}
We investigate vector-valued  Gabor frames (sometimes called Gabor
superframes) based  on   Hermite 
functions $H_n$. Let $\boh = (H_0, H_1, \dots ,  H_n)$ be the vector of the
first $n+1$ Hermite functions.  We  give a complete characterization
of all lattices  $\Lambda \subseteq \bR ^2$  such that the Gabor system
 $\{ e^{2\pi i \lambda
_2 t} \boh (t-\lambda _1): \lambda = (\lambda _1, \lambda _2) \in
\Lambda \}$ is a frame for $L^2 (\bR , \bC ^{n+1} )$. As a corollary
we obtain sufficient conditions for a single Hermite function to
generate a Gabor frame and a new estimate for the lower frame
bound. 
The  main tools are  growth estimates for the Weierstrass
$\sigma $-function, a new type of interpolation problem  for entire
functions on the Bargmann-Fock space, and structural results about
vector-valued Gabor frames. 
\end{abstract}

\title{Gabor (Super)Frames with Hermite Functions}
\author{Karlheinz Gr\"ochenig}
\address{Faculty of Mathematics \\
University of Vienna \\
Nordbergstrasse 15 \\
A-1090 Vienna, Austria}
\email{karlheinz.groechenig@univie.ac.at}
\author{Yurii Lyubarskii}
\address{Department of Mathematics,
Norwegian University of Science and Technology,
7491, Trondheim, Norway}
\email{yura@math.ntnu.no}
\thanks{K.~G.~was supported by the Marie-Curie Excellence Grant
  MEXT-CT 2004-517154. Yu.L. was partially supported by the
Research Council of Norway grant 10323200.\\
This research is a part of the European Science  Foundation Networking Programme
"Harmonic   and Complex Analysis "}
\subjclass[2000]{42C15, 33C90, 94A12}
\keywords{Hermite function, Gabor frame, super frame, 
 density, Weierstrass sigma-function, Wexler-Raz biorthogonality}

\maketitle

\section{Introduction}

Given a function $g\in \lr$ and a lattice $\Lambda \subset \bR ^2$,
we study the frame property of the set $\{e^{2\pi i \lambda _2 t}
g(t-\lambda _1):  \lambda = (\lambda _1,\lambda _2) \in \Lambda\}$.
 Precisely,  write $\pi _\lambda g = e^{2\pi i \lambda _2 t}
g(t-\lambda _1)$ for the \tfs\ by $\lambda=(\lambda_1,\lambda_2) \in \bR ^2$. Then we call
the set $\cG (g,\Lambda ) = \{ \pi  _\lambda g : \lambda \in \Lambda
\}$ a \emph{Gabor frame} or \emph{Weyl-Heisenberg frame}, whenever
there  exist  constants $A,B >0$ such that, 
for all $f\in \lr$,
\begin{equation}
\label{Eq:2.8} A\|f\|^2_{\lr} \leq \sum_{\lambda \in
\Lambda}|\langle f, \pi_\lambda g\rangle_{\lr}|^2 \leq
B\|f\|^2_{\lr}\, .
\end{equation}
  Gabor frames  originate in quantum mechanics through J.~von
Neumann and in information theory through D.~Gabor~\cite{Ga}   and
nowadays have many applications in signal processing. A large body
of results describes the structure of Gabor frames and provides
sufficient conditions of a qualitative nature for  $\cG (g,\Lambda )$
to form a (Gabor) frame, see~\cite{daubechies90,Charlybook} for
details and references.

We will also study vector-valued Gabor frames. In this
case the Hilbert space is 
$\cH=L^2(\bR, \bC^n)$ consisting of all vector-valued functions 
$\bof(t)= \big( f_1(t), \ldots \ , f_n(t) \big)$ with the natural inner product
\beq
\label{product}
\langle \bof, \bog \rangle = \sum_{j=1}^n 
      \int_{-\infty}^\infty f_j(t) \overline{g_j(t)}dt=
               \sum_{j=1}^n \langle f_j, g_j \rangle .
\eeq               
 The {\em time-frequency shifts} $\pi _z$, $z= (x,\xi )$ act coordinate-wise by
\begin{equation}
  \label{eq:c1}
\pi _z\bof (t) =  e^{2i\pi \xi t}\bof(t-x) \, .
\end{equation}
The vector-valued Gabor system
$\cG(\bog, \Lambda)= \{ \pi_\lambda \bog : \lambda \in \Lambda\}   $
is a frame for $L^2(\bR , \bC ^n)$, if there exist constants $A,B>0$  such that 
\beq
\label{frameinequalitya}
A \|\bof\|^2_{L^2(\bR, \bC^n)} \leq \sum_{\lambda \in \Lambda}
 |\langle \bof , \pi_\lambda \bog \rangle |^2 \leq B\|\bof \|^2_{L^2(\bR, \bC^n)},
 \quad \forall \bof \in  L^2(\bR, \bC^n).
 \eeq

Vector-valued Gabor frames were introduced under the name
``superframes'' in signal processing by 
R.~Balan~\cite{balan1} in the context of ``multiplexing'' and studied in
~\cite{balan2,han-larsen} for their own sake.  The idea of
``multiplexing'' is to encode $n$ independent signals
(functions) $f_j\in L^2(\bR ), j=1,\dots , n,$ as a single sequence that captures the
time-frequency information of each $f_j$. Fixing suitable windows $g_j
\in L^2(\bR )$ and using vector-valued notation $\bof = (f_1, \dots ,
f_n)$, one then considers the sequence of numbers
 $\langle \bof , \pi_\lambda \bog \rangle =
  \sum _{j=1}^n \langle f_j, \pi_\lambda g_j\rangle $ 
  for $\lambda \in \Lambda $, i.e., the inner product in
$\vvlrd $. Roughly speaking, these numbers then measure the
time-frequency content of the whole  $\bof$ at the point $\lambda $ in the
time-frequency plane.

 Now one  requires that 
$\bof $ is completely determined by these inner products and that
there exists a stable reconstruction. This requirement leads to the
definition of a vector-valued Gabor frame~\eqref{frameinequalitya}.

The general problem of characterizing \emph{all} lattices $\Lambda $
for which $\cG (g, \Lambda )$ is a frame seems to be extremely
difficult. In fact, this problem is solved only for three classes of
basis functions, namely for the Gaussians  $H_0(t) = e^{-at^2},
a>0,$ in ~\cite{lyu,sw}, for the hyperbolic secant $g(t) =
(\cosh a t )\inv $ in~\cite{JS02}, and for  the one-sided exponential function $g(t) =
e^{-a|t|} \chi _{[0,\infty )}(t)$~\cite{janssen96}.  For the Gaussian,
 our understanding is  based on the  connection between the frame property
 of $\cG (g,\Lambda )$  and a  classical interpolation problem in the
 Bargmann-Fock  space of entire functions. The case of the hyperbolic
 secant is reduced to the case of the Gaussian. 

 For other choices of the $g$, where the connection to complex
 analysis is missing, the   conditions for $\cG(g,a\bZ\times b\bZ)$ to form a frame
 become very different and often rather intriguing~\cite{janssen02b}.

Even  less is known about Gabor superframes. Necessary density
conditions were studied in~\cite{balan2}, sufficient density
conditions can be derived from coorbit theory~\cite{fg89jfa} and from
the sampling theory on the Heisenberg group~\cite{fuehr07}. In particular, these
results  imply
that for any  sufficiently dense lattice $\Lambda $ and a mild condition
on the vector $\bog $ the Gabor system $\cG (\bog , \Lambda )$ is a
Gabor superframe.

In this article we study the frame property of $\cG(\boh , \Lambda)$
in  the case when $\boh $ is the vector of the first $n+1$   Hermite function
 $\boh =(H_0, \dots , H_n)$. In the scalar-valued case, the study of
 Gabor frames with the Gaussian window $H_0$ is natural, because the
 Gaussian minimizes the uncertainty principle. Likewise, for
 vector-valued frames, the study of Gabor superframes with the Hermite
 window $\boh $ is natural, because the first Hermite functions are
 the unique orthonormal set $\{f_0, \dots , f_n\}$ in $L^2(\bR )$ of size $n+1$ 
 satisfying the normalizations $\int_{-\infty}^\infty t |f_j(t)|^2 dt
 =0$ and $ \int_{-\infty}^\infty \xi |\hat{f_j}(\xi)|^2 d\xi =0$,
 $j=0, \dots , n$,   such that  the  uncertainty 
 \beq
 \label{sum_uncertainty}
 \sum _{j=0}^n \left ( \int_{-\infty}^\infty t^2 |f_j(t)|^2 dt +
 \int_{-\infty}^\infty \xi^2 |\hat{f_j}(\xi)|^2 d\xi  \right )
 \eeq
is minimized. 
Another motivation comes again from signal processing where Hermite
functions are used,  see~\cite{matz}.

The case of the Hermite vector window has been already investigated by
 F\"uhr by employing techniques related to sampling in the space of
bandlimited functions on the Heisenberg group.  He   proved the
following result.   

\noindent{\bf Theorem}\cite{fuehr07}.  {\em Let $\boh = (H_0 , \dots ,
  H_n)$.  There exists a constant $C>0$ 
  with the following property: If the diameter of the (smallest)
  fundamental domain of $\Lambda $ is less that $C(n+1)^{-1/2}$, then  
$\cG(\boh,\Lambda)$ is a frame for $L^2(\bR,\bC^{n+1})$.}

 Unfortunately nothing can be said about the constant $C$ within such
 an approach. F\"uhr uses the so-called ``oscillation method'' which
 is used for existence results, but  in general does not yield sharp
 results. 

In this paper, we give  a complete characterization of vector-valued
frames with Hermite functions. If the lattice is given as $\Lambda =
A\bZ ^2$ for some invertible, real-valued $2\times 2$-matrix $A$, let
$s(\Lambda )= |\det A |$ be the area  of the fundamental domain of
$\Lambda $. Our main result can be formulated as follows.

 \begin{thm}
 \label{mainchar1}
Let $\boh = (H_0 , \dots , H_n)$ be the vector of the first $n+1$
Hermite functions. Then   $\cG(\boh,\Lambda)$ is   a frame for
$L^2(\bR,\bC^{n+1})$, \fif\ 
 \beq
 \label{main_estimate}
 s(\Lambda)< \frac{1}{n+1}.
 \eeq 
\end{thm}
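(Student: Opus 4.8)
The plan is to transfer the frame problem to a function-theoretic problem on the Bargmann–Fock space, exactly as in the scalar Gaussian case, but now for a \emph{vector} of windows. The Bargmann transform $B f(z) = c\, e^{-\pi z^2/2}\int_{\mathbb{R}} f(t) e^{2\pi t z - \pi t^2}\,dt$ maps $L^2(\mathbb{R})$ unitarily onto the Fock space $\mathcal{F}^2$, and under $B$ the time-frequency shift $\pi_\lambda$ becomes (up to a unimodular factor) multiplication by the exponential $e_\lambda(z) = e^{\pi \bar\lambda z - \pi|\lambda|^2/2}$ after the usual identification $\lambda = (\lambda_1,\lambda_2)\leftrightarrow \lambda_1 + i\lambda_2$. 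Crucially, $B H_n(z) = c_n z^n$, so $B\mathbf{h}$ is the polynomial vector $(1, c_1 z, \dots, c_n z^n)$, whose components span the space $\mathcal{P}_n$ of polynomials of degree $\le n$. Thus $\cG(\mathbf{h},\Lambda)$ is a frame for $L^2(\mathbb{R},\mathbb{C}^{n+1})$ if and only if for every $F\in\mathcal{F}^2$ the "sampled" quantities, weighted against all of $\mathcal{P}_n$, control $\|F\|$ — equivalently, $\Lambda$ is a \emph{set of interpolation with multiplicity $n+1$} (a "uniqueness set with defect") for $\mathcal{F}^2$ in a suitable sense, where at each lattice point one prescribes the value \emph{and the first $n$ derivatives} of the entire function. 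This reformulation, together with the structural duality results on vector-valued Gabor frames quoted in the abstract (a Wexler–Raz / Ron–Shen type criterion), is the first block of steps.

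Next I would establish the necessity of $s(\Lambda) < 1/(n+1)$. Here the density comes from a Beurling-type counting argument: a sampling/interpolation set of multiplicity $n+1$ for $\mathcal{F}^2$ must have lower Beurling density at least $n+1$ per unit Fock density, i.e. $(n+1)/s(\Lambda) > 1$, which is the stated inequality; the boundary case $s(\Lambda) = 1/(n+1)$ is excluded just as in the scalar case because equality forces the interpolation problem to be critically determined, killing one of the frame bounds. This part should follow from the Balan-type necessary density conditions cited in the excerpt plus a homogeneity/scaling argument.

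The heart of the matter — and the main obstacle — is sufficiency: assuming $s(\Lambda) < 1/(n+1)$, produce the lower frame bound. The strategy is to build an explicit interpolating family. For the scalar Gaussian one uses the Weierstrass $\sigma$-function associated to the lattice $\Lambda$ (rescaled so its fundamental domain has the right area): $\sigma_\Lambda$ vanishes simply exactly on $\Lambda$, has the sharp Gaussian-type growth $|\sigma_\Lambda(z)| \asymp e^{\pi|z|^2 \cdot (\text{density})}$, and dividing by $\sigma_\Lambda$ solves the interpolation problem in $\mathcal{F}^2$ precisely when $s(\Lambda)<1$. For the vector window of size $n+1$ we need functions vanishing to order $n+1$ at each point of $\Lambda$ while still lying in $\mathcal{F}^2$ after the appropriate normalization; the natural candidate is (a modification of) $\sigma_\Lambda^{\,n+1}$, or products of $n+1$ translated $\sigma$-functions $\prod_{k=0}^{n} \sigma_{\Lambda}(z - w_k)$ with the shifts $w_k$ chosen generically, whose zero set is $\Lambda$ counted with the right multiplicity and whose growth exponent is $(n+1)$ times that of a single $\sigma_\Lambda$ — hence the critical threshold moves from $1$ to $1/(n+1)$. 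The delicate points are: (i) the \emph{new interpolation problem} — one must interpolate not just values but full $n$-jets on the Bargmann-Fock space, and show solvability with norm control is equivalent to $s(\Lambda) < 1/(n+1)$; this requires sharp two-sided growth estimates for $\sigma_\Lambda$ and its derivatives near the lattice (uniformity of the local behavior, away from and near the zeros), which is where the $\sigma$-function estimates advertised in the abstract do the work; and (ii) assembling the jet-interpolants into a bounded left inverse of the analysis operator, i.e. verifying the frame inequality itself rather than mere completeness. I expect step (i) — the sharp derivative estimates for $\sigma_\Lambda$ and the consequent solvability of the multiplicity-$(n+1)$ interpolation problem with the exact density cutoff $1/(n+1)$ — to be the technical core and the main obstacle; everything else is either standard Fock-space functional analysis or a bookkeeping of the scalar case raised to the power $n+1$.
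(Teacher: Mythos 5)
Your overall strategy (Bargmann transform, vector Wexler--Raz duality, powers of the Weierstrass $\sigma$-function, threshold $1/(n+1)$) is the same as the paper's, but as written there are two genuine gaps. First, you place the multiplicity-$(n+1)$ interpolation problem, and hence the $\sigma$-function construction, on the lattice $\Lambda$ itself. The duality you invoke sends the frame property of $\mathcal{G}(\mathbf{h},\Lambda)$ to an interpolation problem for the Bargmann transforms of a \emph{dual} window on the \emph{adjoint} lattice $\Lambda^\circ$ (conjugated), whose fundamental domain has area $s(\Lambda)^{-1}>n+1$; only then does $\sigma_{\overline{\Lambda^\circ}}^{\,n+1}$ have growth exponent $\tfrac{\pi}{2}(n+1)s(\Lambda)<\tfrac{\pi}{2}$ and lie in the Fock class. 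With your $\sigma_\Lambda^{\,n+1}$ the growth exponent is $\tfrac{\pi}{2}(n+1)/s(\Lambda)$, which is far too large precisely in the regime $s(\Lambda)<1/(n+1)$, so the construction fails as stated; sampling on $\Lambda$ and interpolation on $\Lambda$ also point in opposite density directions, so the two cannot be ``equivalent'' without routing through Wexler--Raz correctly. Relatedly, the conditions produced by Wexler--Raz are not a pure jet problem: they are triangular combinations of derivatives with anti-holomorphic polynomial coefficients, and they collapse to the jet conditions $G_j^{(m)}(\bar\mu)=0$, $\mu\in\Lambda^\circ\setminus\{0\}$, only because the right-hand sides vanish off the origin --- a point worth making explicit, since for a single Hermite window this reduction is exactly what is unavailable.

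Second, your necessity argument does not close. Balan's density theorem gives only $s(\Lambda)\le 1/(n+1)$, and a ``homogeneity/scaling argument'' cannot exclude the critical case $s(\Lambda)=1/(n+1)$; nothing in your sketch rules it out. The paper excludes it by a concrete complex-analytic step: if a dual existed, $G_n$ would vanish to order $n+1$ on $\Lambda^\circ\setminus\{0\}$ and to order $n$ at $0$, so $\Phi(z)=zG_n(z)/S(z)$ with $S=\sigma_{\overline{\Lambda^\circ}}^{\,n+1}$ is entire; the lower growth bound for $S$ away from the lattice, the $o(e^{\pi|z|^2/2})$ bound for Fock functions, the maximum principle and Liouville force $G_n(z)=cS(z)/z$, which is not in $\mathcal{F}$, a contradiction. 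Finally, on the sufficiency side you anticipate needing sharp two-sided estimates for derivatives of $\sigma$ and an explicit bounded left inverse; in fact, once the interpolants $G_j=\sum_{m\ge j}c_{m,j}\,z^{-(n+1-m)}S(z)$ are exhibited, the frame inequality comes for free from Wexler--Raz together with the Bessel property of the dual system, which follows from the upper growth bound alone via the $M^1$ characterization --- so the technical core is not where you locate it, and the missing piece is the critical-density exclusion above.
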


 By specializing to the $n$-th coordinate of $\boh$, we obtain a
  condition for scalar-valued Gabor frames with
Hermite functions. The following result was already announced in~\cite{glyu}.

\begin{prop}
\label{simple_hermite}
 If $s(\Lambda ) < \frac{1}{n+1}$, then $\cG (H_n , \Lambda )$ is a
 frame for $L^2(\bR )$. 
\end{prop}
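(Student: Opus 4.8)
The plan is to obtain Proposition~\ref{simple_hermite} as an immediate consequence of Theorem~\ref{mainchar1} together with a general \emph{restriction principle} for vector-valued Gabor frames: if $\bog = (g_1, \dots , g_m)$ and $\cG(\bog,\Lambda)$ is a frame for $L^2(\bR,\bC^m)$ with bounds $A,B$, then each scalar system $\cG(g_j,\Lambda)$ is a frame for $L^2(\bR)$ with frame bounds $A$ and $B$.

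First I would introduce, for each $j$, the isometric embedding $\iota_j \colon L^2(\bR) \to L^2(\bR,\bC^m)$ that places $f$ in the $j$-th coordinate and zero in the others. By the coordinate-wise definition~\eqref{eq:c1} of the time-frequency shifts, $\iota_j$ intertwines them, $\pi_\lambda \iota_j f = \iota_j (\pi_\lambda f)$, so that, using~\eqref{product},
\[
\langle \iota_j f , \pi_\lambda \bog \rangle_{L^2(\bR,\bC^m)} = \langle f , \pi_\lambda g_j \rangle_{L^2(\bR)} \qquad \text{and} \qquad \|\iota_j f\|_{L^2(\bR,\bC^m)} = \|f\|_{L^2(\bR)} .
\]
Substituting $\bof = \iota_j f$ into the superframe inequality~\eqref{frameinequalitya} then yields, for every $f \in L^2(\bR)$,
\[
A\|f\|^2_{L^2(\bR)} \le \sum_{\lambda \in \Lambda} |\langle f , \pi_\lambda g_j\rangle|^2 \le B\|f\|^2_{L^2(\bR)},
\]
which is exactly the frame property of $\cG(g_j,\Lambda)$.

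To finish, I would apply this with $\bog = \boh = (H_0, \dots , H_n)$, $m = n+1$, and $j$ the coordinate carrying $H_n$: since $s(\Lambda) < \tfrac{1}{n+1}$, the ``if'' direction of Theorem~\ref{mainchar1} gives that $\cG(\boh,\Lambda)$ is a frame for $L^2(\bR,\bC^{n+1})$, and the restriction principle then produces a frame $\cG(H_n,\Lambda)$ for $L^2(\bR)$, with the same frame bounds.

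I do not expect any genuine obstacle in this argument, since all the analytic work is already carried by Theorem~\ref{mainchar1}; the restriction step is a soft functional-analytic observation. The only subtlety worth recording is that the implication is strictly one-directional: for $n \ge 1$ the scalar window $H_n$ lacks the rigidity of the full vector $\boh$, and $\cG(H_n,\Lambda)$ is expected to remain a frame for some lattices with $s(\Lambda)$ exceeding $\tfrac{1}{n+1}$, so~\eqref{main_estimate} is sufficient but not necessary in the scalar case --- which is why the statement is phrased as a Proposition rather than a characterization.
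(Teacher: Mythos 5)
Your proof is correct and is essentially the paper's own argument: the paper also obtains Proposition~\ref{simple_hermite} directly from Theorem~\ref{mainchar1} by the trivial observation that a vector-valued frame restricts, coordinate-wise (indeed to any nonzero linear combination of the components, cf.\ Proposition~\ref{hermitegeneral}), to a scalar frame $\cG(g_j,\Lambda)$ with the same bounds. Only your closing aside goes against the paper: Proposition~\ref{counter} and the surrounding discussion suggest that $s(\Lambda)<\tfrac{1}{n+1}$ may in fact be necessary for $\cG(H_n,\Lambda)$ as well, rather than improvable in the scalar case.
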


  For $n=0$, the case of the
 Gaussian, we recover  (the lattice part of) the results
 in~\cite{lyu,sw}. Our method of proof yields several new results
 about the dual window. On the one hand, we construct a dual window
 for $\mathcal{G}(H_n, \Lambda )$ with Gaussian decay in time and
 frequency; on the other hand, we derive a new  estimate for the lower
 frame bound of $\mathcal{G}(H_0, \Lambda )$. 
Furthermore, we discuss  an example
 for $n=1$ which   suggests that the
 sufficient conditions of Proposition~\ref{simple_hermite} are sharp for all $n$.

In order to prove these results we combine the   techniques of
Gabor analysis and complex-analytic methods. The (now) classical 
structural results related to  the scalar Gabor frame systems can 
be formulated for the vector case as well.  They lead  to an interpolation
problem in the Fock space of entire functions.

 This problem is not  "purely holomorphic":   the values of
 linear combinations of  functions from the Fock space
  and their derivatives are prescribed in the lattice points; however,  the
 coefficients  of such combination  are antiholomorphic polynomials. 
The classical methods of complex analysis cannot be applied for problems
of this kind.  Fortunately,  in our particular case, i.e.,  for the
lattices of sufficiently 
small density,  one may use well-developed machinery of the elliptic functions.

The paper is organized as follows. In  Section~2 we collect the
necessary facts related to vector-valued frames, in particular, the 
construction of the  frame operator and    
the vector-valued  version of
the structural theorems:  Janssen's representation   and the  Wexler-Raz
biorthogonality criteria. The complex analytic tools  are presented in
Section~3, they include basics about Fock spaces and the growth
estimates of the  Weierstrass  $\sigma$-function. Section~4 contains
the proofs of Theorem  \ref{mainchar1} and Proposition  
\ref{simple_hermite}.  We also prove a result (Proposition \ref{counter}) indicating that
the density condition   in Proposition \ref{simple_hermite}  might be
sharp. The rest of Section~4 contains estimates of dual windows for
the Hermitian frames and of the lower frame bound.

\section{General Gabor Superframes}

  \subsection{Scalar case}

To derive the criterion for Gabor superframes from Hermite  functions
(Theorem~\ref{mainchar1}), we need some general notion and results 
related  to  Gabor superframes
such as  Janssen's  representation of the frame operator and  Wexler-Raz
identities.  Although these are among the fundamental  
results about Gabor frames, they have not yet been formulated  for
Gabor superframes. For convenience and  later reference we formulate
them explicitly for Gabor frames on $\rd $ (instead of $\bR $) in this 
section. Since the vector-valued case is   a simple consequence of the
scalar-valued case, we defer the (easy) proofs to the Appendix.  

\medskip
 
Let $\Lambda = A \bZ ^{2d}$,  be a lattice in $\bR ^{2d}$, where $A$
is a non-singular real  $2d \times 2d$-matrix. Let $s(\Lambda )
= |\det| A$ be  the volume of a fundamental domain of $\Lambda$.
The  {\em adjoint lattice} is defined by the commutant property  as 
\begin{equation}
  \label{eq:c2}
  \Lambda ^\circ = \{ \mu \in \bR ^{2d}: \pi (\lambda ) \pi (\mu ) =
  \pi (\mu ) \pi (\lambda ) \,\, \text{ for all } \, \lambda \in
  \Lambda \}\, .
\end{equation}
 If $\Lambda = \alpha \bZ ^d \times \beta \bZ ^d$, then $\Lambda
^\circ = \beta \inv \bZ ^d \times \alpha \inv \bZ ^d$. In general, for
$\Lambda = A \bZ ^{2d}\subset \bR^{2d}$, then
\begin{equation}
  \label{eq:cn1}
  \Lambda ^\circ =
\mathcal{J}(A^T)\inv \bZ ^{2d} \, ,
\end{equation}
 where $A^T $ is the transpose of $A$
and $\mathcal{J} =
\begin{pmatrix}
  0 & I \\ -I  & 0
\end{pmatrix}$ (consisting of $d\times d$ blocks) is the
matrix defining the standard symplectic
form~\cite{feichtinger-kozek98}. 

 Furthermore, from \eqref{eq:cn1} we see that  
\begin{equation}
  \label{eq:c7}
  s(\Lambda ^\circ ) = s(\Lambda ) \inv \, .
\end{equation}
The density of $\Lambda $ is defined as $d(\Lambda ) = s(\Lambda )\inv
$, so that $d(\Lambda )$ coincides with the usual notions of density.

\smallskip

In order to make the exposition self-contained  we recall the (now)
standard results about Gabor frames.  We refer the reader to Ch.7 in
\cite{Charlybook} for the complete proofs. 

  Given two functions (windows)  
$ g, \gamma  \in L^2(\rd)$,  the associated Gabor {\em frame
operator} is defined to be 
\beq
\label{foperator_scalar}
Sf=S^\Lambda_{g,\gamma}f=
      \sum_{\lambda \in \Lambda} \lng f,\pi_\lambda g \rng \pi_\lambda \gamma .
\eeq

For arbitrary $ g, \gamma  \in L^2(\rd)$ the right-hand side in \eqref{foperator_scalar}
defines a continuous operator from $\cS (\rd )$
to $\cS ' (\rd )$ with weak$^*$-convergence of the sum.
 
 Under  slight conditions on $g$ and $\gamma$, 
 for example if both $g$ and $\gamma$  belong to the Feichtinger algebra 
 $M^1$,    the series in \eqref{foperator_scalar} converges in the usual 
 $L^2(\rd)$-norm.  Recall that a function $g$ on $\rd $  belongs to the Feichtinger algebra 
 $M^1(\rd)$, if 
 \beq
 \label{m_one}
\|g \|_{M^1} :=  \int _{\bR^{2d} } |
\langle g, \pi _z \varphi  \rangle | \, dz < \infty \, , 
\eeq
where $\varphi (t) = 2^{d/4}\, e^{-\pi t^2}$ is the $L^2$-normalized
Gaussian. 
This condition is met if, for example,  both $g$ and its Fourier transform 
$\hat{g}$, decay sufficiently fast, 
 see  \cite{Charlybook}, Ch.~7 and also \cite{ grstudia} for
 discussion and the proofs.  

The convergence of \eqref{foperator_scalar} follows from the following
lemma taken from \cite{Charlybook} (the statement is not stated as
explicitly as we want it, but follows from combining
Propositions~11.1.4, 12.1.11, and 12.2.1). 

\begin{lemma} \label{bessel}
  If $g\in M^1(\rd )$, then, for each lattice $\Lambda$ 
  \begin{eqnarray}
    \label{eq:cn2}
 &    &     \sum _{\lambda \in \Lambda } |\langle f, \pi _\lambda g\rangle |^2
    \leq n(\Lambda ) \, \|g\|_{M^1} ^2 \, \|f\|_2^2 \\
\mathrm{and} \,\,\,\,\, &   &  \|\sum _{\lambda \in \Lambda } c_\lambda  \pi _\lambda
g\|_2 \leq n(\Lambda ) ^{1/2} \, \|g\|_{M_1} \, \|c\|_2 \, ,  \label{eq:cn3}
  \end{eqnarray}
where $n(\Lambda )  = \kappa \max _{k\in \bZ ^{2d}} \mathrm{card}\, (\Lambda \cap (k+[0,1]^{2d}))$ for some absolute constant $\kappa >0$. 
\end{lemma}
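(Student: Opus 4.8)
The statement to prove is Lemma \ref{bessel}, which asserts two inequalities for $g \in M^1(\rd)$: the Bessel bound for the analysis operator, and the adjoint bound for the synthesis operator, both with constant controlled by $n(\Lambda)$.

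The plan is to derive both estimates from a single pointwise bound on the short-time Fourier transform (STFT) with the Gaussian window, combined with a standard lattice-to-box decomposition. First I would recall that for any $f \in L^2(\rd)$, the function $z \mapsto V_\varphi f(z) = \langle f, \pi_z \varphi\rangle$ satisfies $\|V_\varphi f\|_{L^2(\rdd)} = \|f\|_2$ (the orthogonality relations for the STFT, since $\varphi$ is $L^2$-normalized). Next, the key technical point: the membership $g \in M^1$ means precisely $V_\varphi g \in L^1(\rdd)$, and there is a pointwise estimate
\beqs
|\langle f, \pi_\lambda g\rangle| \le \int_{\rdd} |V_\varphi f(z)| \, |V_\varphi g(z - \lambda)| \, dz = (|V_\varphi f| * |V_\varphi g|^\vee)(\lambda),
\eeqs
which follows from writing $g = \int V_\varphi g(w)\, \pi_w \varphi \, dw$ (the inversion formula) and using $|\langle \pi_w \varphi, \pi_\lambda g\rangle|$-type estimates; more cleanly, one uses the covariance $|V_\varphi(\pi_\lambda g)(z)| = |V_\varphi g(z-\lambda)|$ together with $|\langle f, h\rangle| = |\langle V_\varphi f, V_\varphi h\rangle| \le \int |V_\varphi f|\,|V_\varphi h|$. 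So the analysis coefficients are sampled values of a convolution $F * \Psi$ where $F = |V_\varphi f| \in L^2(\rdd)$ and $\Psi = |V_\varphi g|^\vee \in L^1(\rdd)$.

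The second ingredient is a sampling lemma: for $H = F * \Psi$ with $F \in L^2$ and $\Psi \in L^1$, one has $\sum_{\lambda \in \Lambda} |H(\lambda)|^2 \le C_\Lambda \|F\|_2^2 \|\Psi\|_1^2$, where $C_\Lambda$ depends only on the maximal number of lattice points in a unit box. This is where the constant $n(\Lambda) = \kappa \max_{k} \mathrm{card}(\Lambda \cap (k + [0,1]^{2d}))$ enters. To prove it I would cover $\rdd$ by unit cubes $Q_k = k + [0,1]^{2d}$, $k \in \bZ^{2d}$, write $|H(\lambda)|^2 \le (\int_{Q_{k(\lambda)} + \text{(fattened support)}} \dots)$ — actually the clean route is: by Cauchy-Schwarz in the convolution, $|H(\lambda)|^2 \le \|\Psi\|_1 \int |F(\lambda - w)|^2 |\Psi(w)|\, dw$; then summing over $\lambda$ and interchanging, $\sum_\lambda |H(\lambda)|^2 \le \|\Psi\|_1 \int |\Psi(w)| \big(\sum_\lambda |F(\lambda - w)|^2\big) dw$, and finally bound $\sum_\lambda |F(\lambda - w)|^2 \le n(\Lambda) \sup_{v} \int_{v + [0,1]^{2d}} |F|^2 \le n(\Lambda)\|F\|_2^2$ after a further averaging over translates of the sampling points against unit cubes (this last averaging step, turning a sum of point values into an $L^2$-norm, needs $F$ to be nicer than $L^2$ — which it is, since $V_\varphi f$ is continuous, but to make the estimate uniform one replaces $\sum_\lambda |F(\lambda)|^2$ by $\int_0^1 \cdots$ over a unit cell, absorbing the loss into $\kappa$). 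This gives \eqref{eq:cn2} with the identification $\|\Psi\|_1 = \|V_\varphi g\|_1 = \|g\|_{M^1}$.

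For \eqref{eq:cn3} I would argue by duality: the synthesis operator $c \mapsto \sum_\lambda c_\lambda \pi_\lambda g$ is the adjoint of the analysis operator $f \mapsto (\langle f, \pi_\lambda g\rangle)_\lambda$, so its operator norm from $\ell^2(\Lambda)$ to $L^2(\rd)$ equals the operator norm of the analysis operator, which by \eqref{eq:cn2} is at most $n(\Lambda)^{1/2}\|g\|_{M^1}$. One should check the sum in \eqref{eq:cn3} converges unconditionally in $L^2$, which again follows since finite partial sums are uniformly bounded and the STFT arguments give Cauchy-ness; this is routine. The main obstacle is really just the bookkeeping in the sampling lemma — getting the combinatorial constant $n(\Lambda)$ to come out with only a dependence on $\max_k \mathrm{card}(\Lambda \cap (k+[0,1]^{2d}))$ and an absolute $\kappa$, which requires the averaging trick rather than naive pointwise sampling, since $L^2$ functions do not have pointwise values in general. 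Since the excerpt explicitly says this lemma is assembled from Propositions 11.1.4, 12.1.11 and 12.2.1 of \cite{Charlybook}, I would, in a fully written version, simply cite those; here the sketch above is the route one would take to reconstruct it.
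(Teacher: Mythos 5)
You are reconstructing the amalgam-space argument that the paper itself only cites (Propositions 11.1.4, 12.1.11 and 12.2.1 of \cite{Charlybook}), and the step that those propositions supply is exactly the one missing from your sketch. The opening moves are fine: $|\langle f,\pi_\lambda g\rangle|\le \bigl(|V_\varphi f|*|V_\varphi g|^\vee\bigr)(\lambda)$, then Cauchy--Schwarz in the convolution and interchanging sum and integral reduce \eqref{eq:cn2} to the bound $\sum_{\lambda\in\Lambda}|F(\lambda-w)|^2\le n(\Lambda)\,\|F\|_2^2$ uniformly in $w$, with $F=|V_\varphi f|$. But at that point you only use that $F$ is continuous and in $L^2$, and for such functions this inequality is simply false: a continuous, $L^2$-normalized function with sufficiently tall and thin bumps at the points $\lambda-w$ makes the left-hand side arbitrarily large. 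The ``averaging over translates'' you appeal to cannot repair this, because once $w$ is fixed the sample points $\lambda-w$ are fixed, and $w$ is integrated against $|V_\varphi g(w)|\,dw$, which may well concentrate near a single value; there is no free parameter left to average over. So the sampling lemma as you state it (``$F\in L^2$, $\Psi\in L^1$'') is not true, and the constant $n(\Lambda)$ cannot come out of $\|\Psi\|_1=\|g\|_{M^1}$ alone.

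What closes the gap is a local-sup-by-local-$L^2$ control of the function being sampled, and this is precisely what the cited propositions provide. Either use the sub-mean-value property $|V_\varphi f(z)|^2\le C\int_{|u-z|\le 1}|V_\varphi f(u)|^2\,dm_u$, which holds because $V_\varphi f$ is, up to a phase and the weight $e^{-\pi|z|^2/2}$, the modulus of the entire function $\cB f$; then the bounded overlap of unit balls centred at the points of $\Lambda-w$ yields the factor $n(\Lambda)$ uniformly in $w$. Or, equivalently, use that $g\in M^1$ forces $V_\varphi g$ to lie in the Wiener amalgam $W(L^\infty,L^1)$ and not merely in $L^1$ (this is the content of Proposition 12.1.11), so that $|V_\varphi f|*|V_\varphi g|^\vee$ lies in $W(L^\infty,L^2)$ and lattice sampling of such functions is bounded with constant $n(\Lambda)$ (Proposition 12.2.1). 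With either ingredient your argument goes through, and the duality step you give for \eqref{eq:cn3} is then correct as stated; without it, the key inequality in your sampling lemma does not follow.
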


In other words if $g\in M^1(\bR^d)$, then  for each choice of $\Lambda$  the sequence $\mathcal{G}(g,\Lambda )$
is a {\em  Bessel sequence}, i.e.
\[
 \sum _{\lambda \in \Lambda } |\langle f, \pi _\lambda g\rangle |^2 \leq 
  \const \|f\|_2^2.
\]
 Moreover, the
upper frame bound of $\mathcal{G}(g,\Lambda )$ can be estimated
by $n(\Lambda )\, \|g\|_{M^1}^2$.


By a theorem of Rieffel \cite{rieffel88} and Janssen \cite{janssen95} the frame operator can be represented as a sum of time-frequency shifts over the adjoint lattice:
\beq
\label{janssen_scalar}
Sf=S^\Lambda_{g,\gamma}f=
    s(\Lambda)^{-1}  \sum_{\mu \in \Lambda^\circ} \lng \gamma,\pi_\mu g\rng
                    \pi_\mu f .
\eeq

For $g,\gamma \in M^1(\rd)$ the series in the right-hand side converges 
both  in  $L^2(\rd)$-norm and also in operator norm in   $L^2(\rd)$.

Given a frame  $\cG(g,\Lambda)$ we say that $\gamma \in L^2(\rd)$ is a
{\em  dual window }
if $S_{g,\gamma}^\Lambda = I$

Based on  representation \eqref{janssen_scalar}   one  obtains a criterion for   the system 
$\cG(g,\Lambda)$ to   form a frame in $L^2(\rd)$:

\begin{prop}[Wexler-Raz biorthogonality relation]
\label{WR_scalar}
Assume that both  $\cG(g,\Lambda)$ and $\cG(\gamma,\Lambda)$  are Bessel sequences in $L^2(\rd)$. Then $\cG(g,\Lambda)$  is a frame in $L^2(\rd)$ with dual window $\gamma$ if and only if
\beq
\label{biorthogonality_scalar}
\frac 1 {s(\Lambda)} \langle \gamma, \pi_\mu g\rangle = \delta_{0,\mu}, 
\    \text{for} \ \mu \in \Lambda^\circ.
\eeq
\end{prop}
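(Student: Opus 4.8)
The plan is to read the characterization directly off Janssen's representation \eqref{janssen_scalar}. Recall that, by definition, $\gamma$ is a dual window for $\cG(g,\Lambda)$ precisely when $S^\Lambda_{g,\gamma}=I$, and that the identity operator is itself a time-frequency shift, $I=\pi_0$. Since both $\cG(g,\Lambda)$ and $\cG(\gamma,\Lambda)$ are assumed Bessel, $S^\Lambda_{g,\gamma}$ is a bounded operator on $L^2(\rd)$ (indeed $\|S^\Lambda_{g,\gamma}\|\le (B_gB_\gamma)^{1/2}$ by Cauchy--Schwarz, with $B_g,B_\gamma$ the Bessel bounds), \eqref{janssen_scalar} holds, and, since the Bessel property transfers to the adjoint lattice, the coefficient sequence $\big(\langle\gamma,\pi_\mu g\rangle\big)_{\mu\in\Lambda^\circ}$ is square-summable; this makes the series manipulated below absolutely convergent.

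For the implication ``\eqref{biorthogonality_scalar} implies frame with dual $\gamma$'' I would substitute \eqref{biorthogonality_scalar} into \eqref{janssen_scalar}: every term with $\mu\neq0$ drops out and the surviving $\mu=0$ term gives $S^\Lambda_{g,\gamma}f=\frac{1}{s(\Lambda)}\langle\gamma,g\rangle\,\pi_0f=f$, since $\frac{1}{s(\Lambda)}\langle\gamma,g\rangle=\delta_{0,0}=1$; hence $S^\Lambda_{g,\gamma}=I$ and $\gamma$ is a dual window. To upgrade this to the frame property, expand $\|f\|_2^2=\langle S^\Lambda_{g,\gamma}f,f\rangle=\sum_{\lambda\in\Lambda}\langle f,\pi_\lambda g\rangle\overline{\langle f,\pi_\lambda\gamma\rangle}$: the upper frame bound is the Bessel bound $B_g$, while Cauchy--Schwarz in $\ell^2(\Lambda)$ and the Bessel bound $B_\gamma$ of $\cG(\gamma,\Lambda)$ give
\[
\|f\|_2^2\ \le\ B_\gamma^{1/2}\,\|f\|_2\,\Big(\sum_{\lambda\in\Lambda}|\langle f,\pi_\lambda g\rangle|^2\Big)^{1/2},
\]
so $B_\gamma^{-1}$ is a lower frame bound.

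For the converse I would start from $S^\Lambda_{g,\gamma}=I=\pi_0$ and subtract \eqref{janssen_scalar}, so that the bounded operator $T:=\sum_{\mu\in\Lambda^\circ}c_\mu\pi_\mu$, with $c_\mu:=\frac{1}{s(\Lambda)}\langle\gamma,\pi_\mu g\rangle-\delta_{0,\mu}$, vanishes; it then remains to show $c_\mu=0$ for all $\mu$, which is \eqref{biorthogonality_scalar}. This is the linear independence of the time-frequency shifts indexed by the adjoint lattice, and I would establish it by testing $T=0$ against the time-frequency shifted Gaussians $\pi_\nu\varphi$ as $\nu$ ranges over all of $\bR^{2d}$: using unitarity of $\pi_\nu$ and the commutation relation $\pi_\nu^{-1}\pi_\mu\pi_\nu=e^{2\pi i\sigma(\mu,\nu)}\pi_\mu$ (with $\sigma$ the non-degenerate standard symplectic form), the identity $\langle T\,\pi_\nu\varphi,\pi_\nu\varphi\rangle=0$ becomes
\[
\sum_{\mu\in\Lambda^\circ}\big(c_\mu\,\langle\pi_\mu\varphi,\varphi\rangle\big)\,e^{2\pi i\sigma(\mu,\nu)}=0\qquad\text{for every }\nu\in\bR^{2d}.
\]
Since $\langle\pi_\mu\varphi,\varphi\rangle$ has Gaussian decay in $\mu$ and never vanishes, and $(c_\mu)\in\ell^2(\Lambda^\circ)$, the sequence $\big(c_\mu\langle\pi_\mu\varphi,\varphi\rangle\big)$ lies in $\ell^1(\Lambda^\circ)$; the left-hand side is therefore a Bohr almost-periodic function whose frequencies $\nu\mapsto\sigma(\mu,\nu)$ are pairwise distinct (non-degeneracy of $\sigma$), so its identical vanishing forces each coefficient, hence each $c_\mu$, to be zero.

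The hard part is this last step: the linear independence of the \emph{infinite} family $\{\pi_\mu:\mu\in\Lambda^\circ\}$. For finitely many lattice shifts this is classical, so the genuine issue is the convergence of the Janssen series, and this is exactly where the Bessel hypotheses on both $\cG(g,\Lambda)$ and $\cG(\gamma,\Lambda)$ enter: they force $\big(\langle\gamma,\pi_\mu g\rangle\big)_{\mu\in\Lambda^\circ}\in\ell^2$, and, since $\varphi\in M^1(\rd)$ makes $\cG(\varphi,\Lambda^\circ)$ a Bessel sequence, they legitimize the rearrangement $\langle T\pi_\nu\varphi,\pi_\nu\varphi\rangle=\sum_\mu c_\mu\langle\pi_\mu\pi_\nu\varphi,\pi_\nu\varphi\rangle$; multiplying by the rapidly decaying factor $\langle\pi_\mu\varphi,\varphi\rangle$ then brings one into $\ell^1$, where the almost-periodic uniqueness argument is elementary. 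In the applications of this paper $g$ and $\gamma$ lie in $M^1$, so $\big(\langle\gamma,\pi_\mu g\rangle\big)_{\mu\in\Lambda^\circ}\in\ell^1$ from the outset and this technical layer disappears.
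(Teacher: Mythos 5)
Your route is essentially the paper's own (see the proof of Proposition~\ref{wexler_raz} in the Appendix): read both directions off Janssen's representation, and for the necessity part reduce to the linear independence of the time-frequency shifts $\{\pi_\mu\}_{\mu\in\Lambda^\circ}$. The genuine added value in your write-up is that you prove this linear independence yourself, by testing against the shifted Gaussians $\pi_\nu\varphi$, $\nu\in\bR^{2d}$, and using uniqueness of coefficients for absolutely convergent sums of distinct characters; the paper simply cites \cite{gro07a,rieffel88} for that step (with $\ell^\infty$ coefficients and the operator identity read from $M^1$ to $(M^1)^*$). Note that for your almost-periodicity argument you do not need square-summability at all: $|\langle\gamma,\pi_\mu g\rangle|\le\|\gamma\|_2\|g\|_2$ gives $c\in\ell^\infty$, and the Gaussian decay of $\langle\pi_\mu\varphi,\varphi\rangle$ already puts the tested series in $\ell^1$. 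Your frame-bound computation from $S^\Lambda_{g,\gamma}=I$ (lower bound $B_\gamma^{-1}$) is correct and standard.

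The soft spot is the status of \eqref{janssen_scalar} under the bare Bessel hypotheses. As stated in the paper, Janssen's representation is formulated for $g,\gamma\in M^1$ with norm convergence; for Bessel-only windows your justification is doubly shaky: $\ell^2$ coefficients do not give absolute (or operator-norm) convergence of $\sum_\mu c_\mu\pi_\mu$, and the transference of the Bessel property from $\Lambda$ to $\Lambda^\circ$ is itself a duality theorem of comparable depth, not a harmless remark. What you actually need, in both directions, is the weak form of Janssen's representation (the fundamental identity of Gabor analysis): $\langle S^\Lambda_{g,\gamma}f,h\rangle=s(\Lambda)^{-1}\sum_{\mu\in\Lambda^\circ}\langle\gamma,\pi_\mu g\rangle\,\langle\pi_\mu f,h\rangle$ for $f,h$ in a dense class such as $M^1$ --- for instance your $f=h=\pi_\nu\varphi$ --- where the series converges absolutely because $c\in\ell^\infty$ and $(\langle\pi_\mu f,h\rangle)_\mu\in\ell^1$. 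That identity under Bessel-only hypotheses is a theorem that must be quoted or proved (this is what the reference to Ch.~7 of \cite{Charlybook} covers); once it is in place, sufficiency follows by density and boundedness of $S^\Lambda_{g,\gamma}$, and your necessity argument goes through verbatim. With this repair the proof is correct, and, as you observe, in the paper's applications $g,\gamma\in M^1$, so the coefficients are in $\ell^1$ and the issue disappears.
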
 

We remark that for $g,\gamma \in M^1(\rd)$ the sequences $\cG(g,\Lambda)$ and $\cG(\gamma,\Lambda)$ always are Bessel sequences for any choice of
 $\Lambda$ by Lemma~\ref{bessel}.

The Wexler-Raz relations yield an estimate for the lower frame bound
that deserves to be better known.

\begin{cor} \label{lowerbound}
  Assume that $\mathcal{G}(g,\Lambda )$ is a frame for $L^2(\bR ^d)$ with some dual
  $\gamma \in M^1(\bR ^d)$. Then the optimal lower frame bounded
  $A_{\mathrm{opt}} = \|S_{g,g}\inv \|_{L^2\to L^2}\inv$ satisfies
  \begin{eqnarray}
    \label{eq:cn5}
    A_{\mathrm{opt}} \geq n(\Lambda )\inv \,\|\gamma \|_{M^1} ^{-2} \, .
  \end{eqnarray}
\end{cor}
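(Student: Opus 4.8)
The plan is to read off the lower bound directly from the defining property of a dual window combined with the Bessel estimate of Lemma~\ref{bessel}. Since $\gamma$ is a dual window for $\mathcal{G}(g,\Lambda)$, we have $S^\Lambda_{g,\gamma}=I$, that is,
\beq
\label{eq:reconstr-plan}
f=\sum_{\lambda\in\Lambda}\lng f,\pi_\lambda g\rng\,\pi_\lambda\gamma,\qquad f\in\lrd .
\eeq
The series on the right converges in $\lrd$: because $\mathcal{G}(g,\Lambda)$ is a frame, the analysis coefficients $(\lng f,\pi_\lambda g\rng)_{\lambda\in\Lambda}$ lie in $\ell^2(\Lambda)$, and since $\gamma\in M^1(\rd)$ the synthesis estimate \eqref{eq:cn3} applies.

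Next I would pair \eqref{eq:reconstr-plan} with $f$ itself and expand, obtaining $\|f\|_2^2=\sum_{\lambda\in\Lambda}\lng f,\pi_\lambda g\rng\,\overline{\lng f,\pi_\lambda\gamma\rng}$. The Cauchy--Schwarz inequality in $\ell^2(\Lambda)$ bounds the right-hand side by $\big(\sum_{\lambda}|\lng f,\pi_\lambda g\rng|^2\big)^{1/2}\big(\sum_{\lambda}|\lng f,\pi_\lambda\gamma\rng|^2\big)^{1/2}$, and the second factor is at most $n(\Lambda)^{1/2}\|\gamma\|_{M^1}\|f\|_2$ by \eqref{eq:cn2}, again using $\gamma\in M^1(\rd)$. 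Substituting, dividing by $\|f\|_2$ (the case $f=0$ being trivial) and squaring gives
\beq
n(\Lambda)\inv\,\|\gamma\|_{M^1}^{-2}\,\|f\|_2^2\ \le\ \sum_{\lambda\in\Lambda}|\lng f,\pi_\lambda g\rng|^2\qquad\text{for all }f\in\lrd ,
\eeq
so that $n(\Lambda)\inv\|\gamma\|_{M^1}^{-2}$ is an admissible lower frame bound for $\mathcal{G}(g,\Lambda)$.

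Finally, since the optimal lower frame bound $A_{\mathrm{opt}}$ is the largest admissible lower bound, equivalently $A_{\mathrm{opt}}=\inf\mathrm{spec}\,(S_{g,g})=\|S_{g,g}\inv\|_{L^2\to L^2}\inv$, the previous display forces $A_{\mathrm{opt}}\ge n(\Lambda)\inv\|\gamma\|_{M^1}^{-2}$, which is the assertion. I do not anticipate any serious obstacle: once the reconstruction formula \eqref{eq:reconstr-plan} is in place the estimate is a one-line Cauchy--Schwarz argument, and the only point that genuinely needs justification is the $\lrd$-convergence of that series --- which is exactly where the frame hypothesis on $\mathcal{G}(g,\Lambda)$ and the membership $\gamma\in M^1(\rd)$ are invoked.
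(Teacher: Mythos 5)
Your argument is correct and follows essentially the same route as the paper: both start from the reconstruction formula $f=S^\Lambda_{g,\gamma}f=\sum_{\lambda\in\Lambda}\langle f,\pi_\lambda g\rangle\,\pi_\lambda\gamma$ and conclude via Lemma~\ref{bessel}. The only cosmetic difference is that you pair with $f$ and use Cauchy--Schwarz together with the analysis estimate \eqref{eq:cn2}, whereas the paper applies the synthesis estimate \eqref{eq:cn3} directly to $\|f\|_2$; these are dual formulations of the same bound and yield the identical inequality $\|f\|_2^2\leq n(\Lambda)\,\|\gamma\|_{M^1}^2\sum_{\lambda\in\Lambda}|\langle f,\pi_\lambda g\rangle|^2$.
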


\begin{proof}
  Since $f= S^\Lambda_{g,\gamma }f = \sum _{\lambda \in \Lambda }
  \langle f, \pi _\lambda g\rangle \,  \pi _\lambda \gamma$,
  Lemma~\ref{bessel} implies that
$$
\|f\|_2^2 \leq n(\Lambda ) \|\gamma \|_{M^1}^2 \sum _{\lambda \in
  \Lambda } |\langle f, \pi _\lambda g\rangle |^2 \, ,
$$
whence the estimate for the lower frame bound follows. 
\end{proof}

 \subsection{Structure of Vector-Valued Gabor Frames }

For the case of  vector-valued Gabor system the frame-type operator
is defined as 
 \beq
\label{foperator}
S\bof=S^\Lambda_{\bog,\bga}\bof= \sum_{\lambda \in \Lambda}
     \lng \bof, \pi_\lambda \bog \rng \pi_\lambda \bga.
 \eeq
The convergence properties are the same as in the scalar case.  

\begin{prop}
\label{janssen_prop}
 {\rm (Janssen representation for Gabor superframes)} Let the 
windows $\bga=(\gamma_j)_1^n, \bog=(g_j)_1^n\in L^2(\bR ^d, \bC^n)$ be such that 
$\gamma_j, g_j\in M^1$, $j=1,2, \ldots \ ,n.$  Then  the frame type
operator $S_{\bog , \bga }$ associated to $(\bga, \bog; \Lambda)$ 
  can be written as
  \beq
 \label{janssen} 
S\bof = S^\Lambda_{\bga,\bog}\bof =
 \sum_{\mu \in \Lambda^\circ} \Gamma(\mu)\pi_\mu \bof,
\eeq
where $\Gamma(\mu)$ is the $n\times n $ matrix with entries
\beq
\label{gamma_mu}
\Gamma(\mu)_{kl}= s(\Lambda)^{-1}\lng \gamma_k, \pi_\mu  g_l \rng \, 
\eeq
and the sum converges in the operator norm on $\vvlrd $. 
 \end{prop}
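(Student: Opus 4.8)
The plan is to reduce the vector-valued Janssen representation \eqref{janssen} to the scalar Janssen identity \eqref{janssen_scalar} entry by entry, exploiting the fact that time-frequency shifts act coordinatewise by \eqref{eq:c1} and that the inner product on $L^2(\bR^d,\bC^n)$ splits as a sum of scalar inner products by \eqref{product}. First I would unpack the definition \eqref{foperator} of $S^\Lambda_{\bog,\bga}\bof$ and write $\langle \bof, \pi_\lambda \bog\rangle = \sum_{l=1}^n \langle f_l, \pi_\lambda g_l\rangle$, so that the $k$-th component of $S\bof$ becomes
\beq
\label{eq:compexpand}
\big(S\bof\big)_k = \sum_{l=1}^n \sum_{\lambda \in \Lambda} \langle f_l, \pi_\lambda g_l\rangle \, \pi_\lambda \gamma_k = \sum_{l=1}^n S^\Lambda_{g_l,\gamma_k} f_l \, ,
\eeq
where each $S^\Lambda_{g_l,\gamma_k}$ is a scalar mixed-window frame operator as in \eqref{foperator_scalar}. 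Since $g_l,\gamma_k \in M^1(\rd)$, Lemma~\ref{bessel} guarantees that every scalar series converges in $L^2(\rd)$-norm and, by the Rieffel-Janssen theorem, in operator norm; this justifies all interchanges of summation in \eqref{eq:compexpand}.

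Next I would apply the scalar Janssen representation \eqref{janssen_scalar} to each term, obtaining
\beq
\label{eq:applyscalar}
S^\Lambda_{g_l,\gamma_k} f_l = s(\Lambda)^{-1} \sum_{\mu \in \Lambda^\circ} \langle \gamma_k, \pi_\mu g_l\rangle \, \pi_\mu f_l = \sum_{\mu \in \Lambda^\circ} \Gamma(\mu)_{kl} \, \pi_\mu f_l \, ,
\eeq
with $\Gamma(\mu)_{kl}$ exactly as in \eqref{gamma_mu}. Summing over $l$ and recalling that $\pi_\mu$ acts coordinatewise, the $k$-th component of $S\bof$ equals $\sum_{\mu\in\Lambda^\circ}\sum_{l=1}^n \Gamma(\mu)_{kl}\,(\pi_\mu\bof)_l = \sum_{\mu\in\Lambda^\circ}\big(\Gamma(\mu)\pi_\mu\bof\big)_k$, which is precisely \eqref{janssen}. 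The operator-norm convergence of the vector-valued series follows because $\|\Gamma(\mu)\pi_\mu\|_{\mathrm{op}} \le \|\Gamma(\mu)\|_{\mathrm{op}} \le \big(\sum_{k,l}|\Gamma(\mu)_{kl}|^2\big)^{1/2}$ and each scalar sum $\sum_\mu |\langle\gamma_k,\pi_\mu g_l\rangle|$ is finite (indeed summable, since $\cG(g_l,\Lambda^\circ)$ is Bessel and $\gamma_k\in M^1$, so the coefficients $\langle\gamma_k,\pi_\mu g_l\rangle$ lie in $\ell^1$ by the $M^1$ membership of $\gamma_k$).

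The only genuine subtlety — and the step I would be most careful about — is the bookkeeping for norm convergence of the $\Lambda^\circ$-sum: one must check that the $n\times n$ matrix-valued series $\sum_{\mu\in\Lambda^\circ}\Gamma(\mu)\pi_\mu$ converges absolutely in operator norm, which reduces to the absolute convergence $\sum_{\mu\in\Lambda^\circ}\sum_{k,l}|\langle\gamma_k,\pi_\mu g_l\rangle| < \infty$. This is a finite sum of scalar statements, each of which is the standard fact (Ch.~7 of \cite{Charlybook}) that for $g,\gamma\in M^1$ the cross-coefficients $(\langle\gamma,\pi_\mu g\rangle)_{\mu\in\Lambda^\circ}$ are in $\ell^1$. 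Everything else is the routine linearity and coordinatewise bookkeeping indicated above, so — as the excerpt itself notes — the full argument is deferred to the Appendix.
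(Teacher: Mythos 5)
Your proposal is correct and follows essentially the same route as the paper's Appendix proof: expand the $k$-th component of $S\bof$ via \eqref{product} into the finite sum $\sum_{l} S^\Lambda_{g_l,\gamma_k} f_l$ of scalar mixed frame operators and apply the scalar Janssen representation \eqref{janssen_scalar} to each. One small caveat: the $\ell^1$-summability of $(\langle\gamma_k,\pi_\mu g_l\rangle)_{\mu\in\Lambda^\circ}$ does not follow from the Bessel property (which only gives $\ell^2$) but from the scalar $M^1$ theory behind the operator-norm convergence of \eqref{janssen_scalar}, which is exactly what the paper invokes.
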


Given a Gabor (super)frame  $\cG (\bog, \Lambda )$ with $\bog \in
L^2(\bR ^d, \bC^n)$,  we say that  $\bga \in  L^2(\bR ^d, \bC^n)$ is a 
{\em dual window}  (with respect to the lattice $\Lambda$) if 
\beq
\label{dual}
        S^\Lambda_{\bog, \bga }=   S^\Lambda_{\bga,\bog} = I.
\eeq

Dual windows always exist, a special choice is given by the
\emph{canonical dual window}  
$\bga ^\circ :=  \left (S_{\bog   , \bog }^\Lambda \right )  \inv \bog $;
as in the scalar case the invertibility of  
   $ S_{\bog    , \bog }^\Lambda  $  
   is an easy consequence of the frame property.
   
\smallskip

From Janssen's representation we obtain a criteria for the system
$\cG(\bog, \Lambda)  $ forms a frame in $L^2(\bR ^d,\bC^n)$ with dual window $\bga$,
that is a vector analog of the Wexler-Raz condition.

\begin{prop}
\label{wexler_raz}
{\rm (Wexler-Raz biorthogonality).}
Assume that both $\cG(\bog, \Lambda) $ and $   \cG(\bga, \Lambda) $
are Bessel sequences in $L^2(\bR ^d, \bC^n)$. Then  $\cG(\bog, \Lambda) $ is a vector-valued frame in 
$ L^2(\bR ^d, \bC^n)$ with dual window $\bga$ if and only if
\beq
\label{wr_orthogonality}
\frac 1 {s(\Lambda)} \lng \gamma_l, \pi_\mu g_j \rng =
       \delta_{0, \mu} \delta_{l,j} \quad \text{for} \quad
       \mu \in \Lambda^\circ, \ j, l = 1,2, \ldots , n \, . 
\eeq
\end{prop}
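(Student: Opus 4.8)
The plan is to reduce the vector-valued statement to the scalar Wexler-Raz relation (Proposition \ref{WR_scalar}) via the Janssen representation for superframes (Proposition \ref{janssen_prop}), which is the natural analog of the argument in the scalar case. First I would recall that, by definition, $\cG(\bog,\Lambda)$ is a frame with dual window $\bga$ precisely when $S^\Lambda_{\bog,\bga} = S^\Lambda_{\bga,\bog} = I$ on $L^2(\bR^d,\bC^n)$. By Proposition~\ref{janssen_prop}, since all the $g_j,\gamma_l$ lie in $M^1$, the operator $S^\Lambda_{\bga,\bog}$ has the absolutely convergent expansion $S^\Lambda_{\bga,\bog}\bof = \sum_{\mu\in\Lambda^\circ}\Gamma(\mu)\pi_\mu\bof$ with $\Gamma(\mu)_{lj} = s(\Lambda)^{-1}\lng\gamma_l,\pi_\mu g_j\rng$. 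So the task is to show that $\sum_{\mu\in\Lambda^\circ}\Gamma(\mu)\pi_\mu = I$ as an operator identity if and only if $\Gamma(\mu) = \delta_{0,\mu} I_n$ for every $\mu\in\Lambda^\circ$, i.e. \eqref{wr_orthogonality}.

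One direction is immediate: if \eqref{wr_orthogonality} holds, then $\Gamma(\mu)=0$ for $\mu\neq 0$ and $\Gamma(0) = I_n$ (note $s(\Lambda)^{-1}\lng\gamma_l,g_j\rng = \delta_{l,j}$ is exactly the $\mu=0$ instance), so the Janssen sum collapses to $\Gamma(0)\pi_0 = I$, and $\cG(\bog,\Lambda)$ is then a frame with dual $\bga$ (the frame inequalities \eqref{frameinequalitya} follow from $I = S^\Lambda_{\bog,\bga}$ together with the Bessel bounds for the two systems by a standard polarization/Cauchy--Schwarz argument). For the converse, suppose $S^\Lambda_{\bga,\bog}=I$. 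Testing the operator identity $\sum_{\mu\in\Lambda^\circ}\Gamma(\mu)\pi_\mu\bof = \bof$ against time-frequency shifts $\pi_\nu\boh$ of a fixed nice vector and using the (quasi-)orthogonality relations $\lng \pi_\mu\bof,\pi_\nu\boh\rng$ as $\mu$ ranges over the lattice $\Lambda^\circ$ — equivalently, invoking the uniqueness of the Janssen coefficients, which holds because the time-frequency shifts $\{\pi_\mu : \mu\in\Lambda^\circ\}$ are linearly independent in a strong enough sense (they are even ``weakly independent'', and the $M^1$ hypothesis gives absolute convergence so that coefficient comparison is legitimate) — one reads off $\Gamma(\mu) = \delta_{0,\mu}I_n$, which is \eqref{wr_orthogonality}. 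Concretely, the cleanest route is to apply the scalar Proposition~\ref{WR_scalar} coordinate-by-coordinate: fix indices $l,j$ and consider the scalar windows $g_j,\gamma_l\in M^1(\rd)$; the off-diagonal relations $\lng\gamma_l,\pi_\mu g_j\rng = 0$ ($l\neq j$) and the diagonal relations $s(\Lambda)^{-1}\lng\gamma_j,\pi_\mu g_j\rng = \delta_{0,\mu}$ are extracted by feeding into the operator identity inputs of the form $\bof = (0,\dots,0,f,0,\dots,0)$ with $f$ in the $j$-th slot and comparing $j'$-th output components.

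The only genuinely delicate point is the passage from the operator identity to the entrywise identity for the matrices $\Gamma(\mu)$, i.e. the uniqueness of the Janssen expansion; everything else is bookkeeping. Since $g_j,\gamma_l\in M^1$, Lemma~\ref{bessel} guarantees that $\cG(\bog,\Lambda)$ and $\cG(\bga,\Lambda)$ are automatically Bessel (so the Bessel hypothesis in the statement is in fact redundant here, though we keep it for generality), and it also guarantees $\sum_{\mu\in\Lambda^\circ}\|\Gamma(\mu)\|<\infty$, so the sum $\sum_\mu\Gamma(\mu)\pi_\mu$ converges absolutely in operator norm and term-by-term manipulations are justified. With that in hand, the uniqueness follows from the corresponding scalar fact (the $\pi_\mu$, $\mu\in\Lambda^\circ$, are $\ell^1$-independent as operators), proved in detail in \cite{Charlybook}, Ch.~7, and quoted here as Proposition~\ref{WR_scalar}. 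I would therefore phrase the whole proof as: expand $S^\Lambda_{\bga,\bog}$ by Proposition~\ref{janssen_prop}, then apply Proposition~\ref{WR_scalar} to each pair $(g_j,\gamma_l)$ of scalar components, obtaining \eqref{wr_orthogonality}; conversely, \eqref{wr_orthogonality} makes the Janssen sum telescope to the identity. The detailed (routine) verification is deferred to the Appendix, consistent with the convention announced at the start of Section~2.
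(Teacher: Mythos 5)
Your proposal matches the paper's own proof in essence: both directions go through the Janssen representation for superframes, the ``if'' direction being the trivial collapse of the sum $\sum_{\mu}\Gamma(\mu)\pi_\mu$ to the identity, and the converse being read off componentwise (inputs supported in a single slot) from the uniqueness of the coefficients, i.e.\ the linear independence of the time-frequency shifts $\pi_\mu$, $\mu\in\Lambda^\circ$, which the paper cites rather than reproves. The only slight discrepancy is that you import $g_j,\gamma_l\in M^1$ and declare the Bessel hypothesis redundant, whereas the statement assumes only Bessel windows; the paper covers this case by using the independence of shifts with $\ell^\infty$ coefficients as operators from $M^1$ to $(M^1)^*$, so you should phrase the uniqueness step at that weaker level rather than relying on absolute operator-norm convergence.
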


As in the scalar case we obtain Balan's  necessary density condition~\cite{balan2} by
adjusting an argument of Janssen~\cite{janssen94}. 

\begin{prop} \text{(Density theorem.)} 
\label{density}
 If $\cG(\bog, \Lambda)$ is a frame for 
$L^2(\bR ^d, \bC^n)$, then $s(\Lambda)\leq n^{-1}$.
\end{prop}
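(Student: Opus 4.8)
The plan is to run Janssen's density argument for scalar Gabor frames (the bound ``$s(\Lambda)\le 1$'' from \cite{janssen94}) and simply keep track of the $n$ components by summing.

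First I would pass to the canonical tight frame. Since $\cG(\bog,\Lambda)$ is a frame, the frame operator $S=S^\Lambda_{\bog,\bog}$ is bounded, positive and invertible on $L^2(\rd,\bC^n)$ and commutes with every $\pi_\lambda$, $\lambda\in\Lambda$. Setting $\boh=(h_1,\dots,h_n):=S^{-1/2}\bog\in L^2(\rd,\bC^n)$, a routine computation gives $S^\Lambda_{\boh,\boh}=S^{-1/2}SS^{-1/2}=I$, i.e. $\cG(\boh,\Lambda)$ is a Parseval frame,
$$\sum_{\lambda\in\Lambda}|\lng\bof,\pi_\lambda\boh\rng|^2=\|\bof\|^2 \qquad \text{for all } \bof\in L^2(\rd,\bC^n).$$
In particular $\cG(\boh,\Lambda)$ is a Bessel sequence (with bound $1$) and $\boh$ is a dual window of itself, so the Wexler--Raz biorthogonality relations of Proposition~\ref{wexler_raz} apply to the pair $\bog=\bga=\boh$.

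Second, I would read off the information at the origin of $\Lambda^\circ$. Proposition~\ref{wexler_raz} gives $s(\Lambda)^{-1}\lng h_l,\pi_\mu h_j\rng=\delta_{0,\mu}\,\delta_{l,j}$ for all $\mu\in\Lambda^\circ$ and all $j,l$; taking $\mu=0$ yields $\lng h_l,h_j\rng=s(\Lambda)\,\delta_{l,j}$. Thus the components of $\boh$ are pairwise orthogonal with $\|h_j\|^2=s(\Lambda)$, and hence $\|\boh\|^2=\sum_{j=1}^n\|h_j\|^2=n\,s(\Lambda)$. On the other hand, applying the Parseval identity above with $\bof=\boh$ and retaining only the term $\lambda=0$ gives $\|\boh\|^2\ge|\lng\boh,\boh\rng|^2=\|\boh\|^4$; since $\boh\neq 0$ this forces $\|\boh\|^2\le 1$. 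Combining the two relations yields $n\,s(\Lambda)=\|\boh\|^2\le 1$, i.e. $s(\Lambda)\le n^{-1}$.

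There is no real obstacle: the argument is precisely Janssen's scalar one, the only ``adjustment'' being the summation $\|\boh\|^2=\sum_{j=1}^n\|h_j\|^2$ over the $n$ coordinates (for $n=1$ one recovers $s(\Lambda)\le 1$). The single point that genuinely needs checking is the applicability of Proposition~\ref{wexler_raz}, i.e. that $\cG(\boh,\Lambda)$ is a Bessel sequence, which is immediate because it is a Parseval frame; note also that, unlike the other structural results of this section, no $M^1$-hypothesis on the windows is needed here. A byproduct worth recording is that the components of the window of a tight Gabor superframe are mutually orthogonal of equal norm $\sqrt{s(\Lambda)}$.
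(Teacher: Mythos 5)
Your argument is correct, and it reaches the bound by a route that differs from the paper's in its key technical device. The paper follows Janssen's original scheme: it keeps the window $\bog$ as is, takes the canonical dual $\bga^\circ=S_{\bog,\bog}\inv\bog$, applies the Duffin--Schaeffer least-squares property of the canonical frame coefficients to the trivial expansion $\bog=1\cdot\bog+\sum_{\lambda\neq 0}0\cdot\pi_\lambda\bog$ to get $|\lng\bog,\bga^\circ\rng|^2\le\sum_{\lambda}|\lng\bog,\pi_\lambda\bga^\circ\rng|^2\le 1$, and then reads off $\lng\bog,\bga^\circ\rng=\sum_j\lng g_j,\gamma_j\rng=n\,s(\Lambda)$ from the Wexler--Raz relations at $\mu=0$. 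You instead pass to the canonical Parseval frame $\boh=S^{-1/2}\bog$ (which requires the standard but not entirely free observation that $S$, hence $S^{-1/2}$, commutes with $\pi_\lambda$ for $\lambda\in\Lambda$), replace the minimal-coefficient lemma by the Parseval identity with $\bof=\boh$ truncated to the term $\lambda=0$, and use Wexler--Raz for the self-dual window to get $\|\boh\|^2=n\,s(\Lambda)$. The two proofs thus share the decisive input --- Wexler--Raz at $\mu=0$ under a Bessel-only hypothesis, exactly as Proposition~\ref{wexler_raz} is stated --- but differ in how the $\lambda=0$ coefficient is bounded by $1$: the paper via the $\ell^2$-minimality of canonical dual coefficients, you via tightness. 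Your version avoids citing \cite{DS52} and yields the pleasant byproduct that the components of a tight superframe window are mutually orthogonal with common norm $\sqrt{s(\Lambda)}$; the paper's version avoids the operator square root and the commutation argument. Both are equally rigorous within the framework of Section~2.
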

 

\begin{proof}
  Let $\bga ^\circ= S_{\bog , \bog } \inv \bog$   be
 the canonical dual window. Then $\bof = \sum_{\lambda \in \Lambda}
 \langle \bof, \pi (\lambda ) \bga ^\circ \rangle \pi_\lambda \bog
 $ holds for every $\bof \in \vvlrd $. If $\bof $ has also the representation  
$    \bof = \sum_{\lambda \in \Lambda} c_\lambda \pi_\lambda \bog$, 
  then  by \cite{DS52}
 \[
 \sum_{\lambda \in \Lambda} |\lng \bof, \pi_\lambda \bga \rng |^2 \leq
      \sum_{\lambda \in \Lambda} |c_\lambda|^2 \,.
    \]
    We apply this argument to the trivial expansion 
    $g= 1\cdot \bog+\sum_{\lambda \neq 0} 0\cdot \pi_\lambda \bog$. Thus we obtain
 \[
 |\lng \bog, \bga \rangle |^2\leq \sum_{\lambda \in
   \Lambda}|\lng\bog,\pi_\lambda\bga \rng |^2 \leq 1. 
 \]
The Wexler-Raz identities \eqref{wr_orthogonality} yield
\[
\lng \bog, \bga \rng = \sum_{j=1}^n \lng g_j, \gamma_j \rng = ns(\Lambda),
\]
and the result follows.
\end{proof}

The following duality result is not needed in the sequel, but is
included for completeness. It answers a question of our engineering
colleague G.~Matz, see e.g. \cite{matz}.

\begin{thm}[Janssen-Ron-Shen duality] \label{ron-shen}
  The Gabor system $\cG (\mathbf{g}, \Lambda )$ is a frame for 
  $L^2(\bR^d, \bC^n)$,
  \fif\ the union of Gabor systems $\bigcup _{j=1}^n \cG (g_j, \Lambda
  ^\circ )$  is a Riesz sequence for $\lrd $. 
\end{thm}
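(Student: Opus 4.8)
The natural plan is to deduce Theorem~\ref{ron-shen} from the structural results of this section, in exactly the way the classical scalar duality principle of Ron--Shen follows from the scalar Wexler--Raz relations. The engine will be Proposition~\ref{wexler_raz}; the only extra inputs I will need are \emph{(i)} the \emph{duality of Bessel sequences} --- that $\cG(\mathbf{g},\Lambda)$ is a Bessel sequence for $L^2(\bR^d,\bC^n)$ \fif\ $\bigcup_{j=1}^n\cG(g_j,\Lambda^\circ)$ is a Bessel sequence for $\lrd$, with comparable bounds --- which one extracts from the Janssen representation of Proposition~\ref{janssen_prop} by the same computation as in the scalar case, and which is automatic once $g_j\in M^1$ by Lemma~\ref{bessel}; and \emph{(ii)} the elementary fact that two biorthogonal Bessel sequences are each a Riesz sequence, with bounds reciprocal to the Bessel bounds. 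Throughout I will use that for $\mu,\nu\in\Lambda^\circ$ one has $\langle\pi_\nu u,\pi_\mu v\rangle=\tau(\mu,\nu)\,\langle u,\pi_{\mu-\nu}v\rangle$ with $|\tau(\mu,\nu)|=1$, $\tau(\mu,\mu)=1$, and $\mu-\nu\in\Lambda^\circ$; consequently the Wexler--Raz identities \eqref{wr_orthogonality} say precisely that the families $\{\pi_\mu\gamma_l\}$ and $\{\pi_\mu g_j\}$, indexed over $\Lambda^\circ\times\{1,\dots,n\}$, are biorthogonal after the scaling $\gamma_l\mapsto s(\Lambda)\inv\gamma_l$.

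For the forward implication I would argue as follows. Suppose $\cG(\mathbf{g},\Lambda)$ is a frame for $L^2(\bR^d,\bC^n)$ and let $\bga=(\gamma_1,\dots,\gamma_n)$ be its canonical dual window; then $\cG(\bga,\Lambda)$ is again a frame, in particular a Bessel sequence, and Proposition~\ref{wexler_raz} yields $s(\Lambda)\inv\langle\gamma_l,\pi_\mu g_j\rangle=\delta_{0,\mu}\delta_{l,j}$ for $\mu\in\Lambda^\circ$. By the remark above, $\{s(\Lambda)\inv\pi_\mu\gamma_l\}$ and $\{\pi_\mu g_j\}$ are biorthogonal in $\lrd$; by the Bessel duality applied to the Bessel sequences $\cG(\bga,\Lambda)$ and $\cG(\mathbf{g},\Lambda)$, both families are Bessel. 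Since two biorthogonal Bessel sequences form Riesz sequences, $\bigcup_{j=1}^n\cG(g_j,\Lambda^\circ)$ is a Riesz sequence for $\lrd$.

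For the converse I would start from a Riesz sequence $\bigcup_{j=1}^n\cG(g_j,\Lambda^\circ)$ in $\lrd$ with closed span $V$, and first observe that its unique biorthogonal system inside $V$ is again of Gabor type: each $\pi_\kappa$, $\kappa\in\Lambda^\circ$, maps the generating set $\{\pi_\mu g_j\}$ onto itself up to unimodular factors, hence maps $V$ onto $V$ and permutes the biorthogonal system compatibly, which forces the biorthogonal element at index $(\mu,j)$ to equal $\pi_\mu h_j$ for fixed windows $h_1,\dots,h_n\in V$ satisfying $\langle h_j,\pi_\mu g_l\rangle=\delta_{0,\mu}\delta_{j,l}$. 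Moreover $\{\pi_\mu h_j\}$ is itself a Riesz sequence, hence Bessel, so by Bessel duality $\cG((h_1,\dots,h_n),\Lambda)$ is Bessel, and so is $\cG(\mathbf{g},\Lambda)$. Setting $\gamma_l:=s(\Lambda)h_l$ turns $\langle h_j,\pi_\mu g_l\rangle=\delta_{0,\mu}\delta_{j,l}$ into the Wexler--Raz identities \eqref{wr_orthogonality}, and Proposition~\ref{wexler_raz} then asserts that $\cG(\mathbf{g},\Lambda)$ is a frame for $L^2(\bR^d,\bC^n)$ with dual window $(\gamma_1,\dots,\gamma_n)$, completing the equivalence.

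I do not expect a genuine obstacle once Propositions~\ref{janssen_prop} and \ref{wexler_raz} are available; the one point that will require care is tracking the Bessel property of the four Gabor systems that appear, which is exactly where item \emph{(i)} is used. This is harmless for the application intended in this paper --- the Hermite windows lie in $\cS(\bR)\subset M^1$, so Lemma~\ref{bessel} makes every Gabor system above automatically a Bessel sequence, and the proof collapses to the bookkeeping of recognizing \eqref{wr_orthogonality} as a biorthogonality relation on the adjoint lattice. (For general $g_j\in\lrd$ the Bessel duality still holds and can be obtained from Proposition~\ref{janssen_prop} along the lines of the scalar argument in \cite{Charlybook}.)
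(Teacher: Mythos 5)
Your proof is correct and follows essentially the same route as the paper's: the forward direction via the canonical dual and the Wexler--Raz biorthogonality (your ``biorthogonal Bessel sequences are Riesz sequences'' lemma is exactly the paper's two-inequality computation), and the converse via the observation that the biorthogonal system of the Riesz sequence is itself of Gabor type by $\Lambda^\circ$-invariance, followed by Wexler--Raz. The only real difference is that you isolate as an explicit ingredient the transference of the Bessel property between $\Lambda$ and $\Lambda^\circ$, which the paper uses tacitly when it invokes the Bessel property of $\cG (g_j,\Lambda^\circ)$ and $\cG (\gamma_j,\Lambda^\circ)$.
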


 \section{Complex Methods:  Fock space and the Weierstrass
   sigma-function}

The complex analytic techniques  we use   are concentrated around
 the Fock space   of  entire functions and precise estimates for
 the Weierstrass 
$\sigma$-function. These topics are closely related: roughly speaking   
$\sigma$-functions deliver examples of functions of "maximal possible growth" 
in  the Fock space.

\subsection{Fock space}
  
In this subsection we recall the basic properties of the Fock space
which are important for our applications. We  refer the reader to \cite{Folland}, 
\cite{Charlybook} for detailed proofs and also for discussion of numerous application
of this space in signal analysis and quantum mechanics.
 
 \begin{defin}
 \label{Fock}
 The Fock space $\cF$ is the Hilbert space of all entire functions such that
 \begin{equation}
 \label{Fock_definition}
 \|F\|^2_\cF=\int_ \bC |F(z)|^2 e^{-\pi |z|^2} dm_z <\infty,
 \end{equation}
 where $dm$ is the planar Lebesgue measure.
\end{defin}
The natural inner product in $\cF$ is  denoted by  $\lng \cdot,\cdot \rng_\cF$.

Below we list the properties of the Fock space, which will be used in the sequel. 

(a)  Each  point evaluation is a bounded linear functional in $\cF$, 
 and  the  corresponding reproducing kernel is the function $w\mapsto
 e^{\pi \bar{z} w }$, precisely, 
\beq
\label{reproducing} 
F(z) = \lng F(w), e^{\pi \bar{z} w } \rng_\cF \, ,  \qquad  \forall  F\in \cF.
\eeq

(b) The  functions from  $  \cF$ grow at most  like the  Gaussian
  (see e.g. \cite{LyuSe}),  more precisely,  
\beq
\label{growth_estimate}
|F(z)| = o(1) e^{\frac \pi 2 |z|^2}, \ \text{as} \ z \to \infty, \ F\in \cF.
\eeq

(c) The collection  of monomials 
\beq
\label{monomials}
e_n(z)=\Big(\frac{\pi^n}{n!}\Big)^{1/2} \, z^n, \ n=0,1,\ldots
\eeq 
forms an orthonormal basis in $\cF$. 

(d) Define the \BT\ of a function $f\in L^2(\bR)$  by 
\beq
 \label{bt}
  f \mapsto \cB f(z)=F(z)= 2^{1/4}e^{-\pi z^2 /2}\int_\bR f(t)e^{-\pi
    t^2}e^{2\pi t z} dt. 
  \eeq
%


\begin{prop}
\label{unitary}
The \BT is a unitary mapping between $L^2(\bR)$ and $\cF$. 
\end{prop}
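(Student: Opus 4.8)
The plan is to prove that the \BT $\cB$ is a bijective isometry from $\lr$ onto $\cF$; unitarity is then automatic. First I would check that $\cB$ is well defined and takes values in the entire functions: for $z=x+iy$ the integrand $t\mapsto f(t)e^{-\pi t^2}e^{2\pi tz}$ is bounded in modulus by $|f(t)|\,e^{-\pi t^2+2\pi tx}\in L^1(\bR)$ (Cauchy--Schwarz), and this bound is locally uniform in $z$, so $\cB f$ is a well-defined entire function (differentiate under the integral, or apply Morera). Linearity of $f\mapsto \cB f$ is clear.

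The heart of the argument is the isometry $\|\cB f\|_\cF=\|f\|_{\lr}$. I would start from the identity $|e^{-\pi z^2/2}|^2 e^{-\pi|z|^2}=e^{-2\pi x^2}$, which reduces $|\cB f(z)|^2 e^{-\pi|z|^2}$ to $2^{1/2}e^{-2\pi x^2}\,\big|\int_\bR f(t)e^{-\pi t^2+2\pi tx}e^{2\pi ity}\,dt\big|^2$. For fixed $x$ the inner integral is, up to normalization and reflection, the Fourier transform in $y$ of the $L^2$-function $t\mapsto f(t)e^{-\pi t^2+2\pi tx}$; integrating in $y$ and invoking Plancherel's theorem, then Tonelli's theorem on the resulting nonnegative double integral, gives $\|\cB f\|_\cF^2=2^{1/2}\int_\bR |f(t)|^2 e^{-2\pi t^2}\big(\int_\bR e^{-2\pi x^2+4\pi tx}\,dx\big)\,dt$. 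Evaluating the inner Gaussian integral as $2^{-1/2}e^{2\pi t^2}$ collapses everything to $\int_\bR |f(t)|^2\,dt$. Hence $\cB$ is isometric, in particular bounded, and its range is a closed subspace of $\cF$.

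To finish I would show that the range of $\cB$ is dense. The route that best fits the theme of the paper is to verify the classical identity $\cB H_n=e_n$ for every $n\ge 0$ by substituting the generating function of the Hermite polynomials into the defining integral~\eqref{bt}; since $\{H_n\}$ is an orthonormal basis of $\lr$ and $\{e_n\}$ is one of $\cF$ by property~(c), the range contains a basis and is therefore dense (this even re-proves the isometry). Alternatively, a quick completion of the square shows that $\cB$ applied to $\beta_w(t):=2^{1/4}e^{-\pi\bar w^2/2}e^{-\pi t^2+2\pi\bar w t}\in\lr$ produces the reproducing kernel $z\mapsto e^{\pi\bar w z}$ of property~(a); these kernels span a dense subspace because any $F\in\cF$ orthogonal to all of them vanishes by~\eqref{reproducing}. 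Either way, a dense closed subspace of $\cF$ equals $\cF$, so $\cB$ is onto, and a surjective isometry is unitary.

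The one point requiring genuine care is the exchange of integrations in the isometry computation: there is no Gaussian decay in the $y$-variable, so a naive appeal to Fubini is not legitimate. The remedy is to recognize the $y$-integration as a single-variable application of Plancherel's theorem, after which only the nonnegative $(x,t)$-integral remains and Tonelli applies; the rest is a routine Gaussian integral.
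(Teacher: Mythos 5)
Your proof is correct, but note that the paper does not actually prove Proposition~\ref{unitary}: it is listed among the standard properties of the Fock space and quoted with references to Folland and to \cite{Charlybook}, where unitarity is usually obtained either via the orthogonality relations for the short-time Fourier transform with Gaussian window or by checking $\cB H_n=e_n$ against the two orthonormal bases. Your route is the more self-contained classical computation: the identity $|e^{-\pi z^2/2}|^2e^{-\pi|z|^2}=e^{-2\pi x^2}$, Plancherel in the $y$-variable for fixed $x$, Tonelli on the remaining nonnegative $(x,t)$-integral, and the Gaussian integral $\int_\bR e^{-2\pi x^2+4\pi tx}\,dx=2^{-1/2}e^{2\pi t^2}$ indeed give $\|\cB f\|_\cF=\|f\|_2$, and your care with the lack of decay in $y$ (Plancherel instead of a naive Fubini) is exactly the right fix. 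For surjectivity, either of your two density arguments works; the second one (images of the coherent states $\beta_w$ are the reproducing kernels $e^{\pi\bar w z}$, which span a dense subspace by \eqref{reproducing}) is preferable here because it uses only property~(a), whereas the first route leans on $\cB H_n=e_n$ and property~(c), which the paper itself quotes as \eqref{image} and \eqref{monomials} — still legitimate, since those facts are independent computations, but it looks closer to assuming what is being repackaged.

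One small caveat: the parenthetical claim that the basis computation ``even re-proves the isometry'' is not quite automatic. Knowing $\cB H_n=e_n$ shows that the restriction of $\cB$ to the span of the Hermite functions extends to a unitary map, but to identify that extension with the integral operator \eqref{bt} on all of $L^2(\bR)$ you need an extra continuity observation, e.g.\ that for fixed $z$ the functional $f\mapsto\cB f(z)$ is bounded on $L^2(\bR)$ (Cauchy--Schwarz) and that norm convergence in $\cF$ implies pointwise convergence (property~(a)). Your main argument does not use this remark, so the proof as structured is complete.
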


(e) The  \HFS\  are defined by  
\beq
\label{HFS}
H_n(t)= c_ne^{\pi t^2} \frac {d^n}{dt^n} e^{-2\pi t^2}, \ n=0,1,2, \dots
, .
\eeq   
where the coefficients $c_n$ are chosen in order to have $\|H_n\|_2=1$. 
It  is a classical result that  the set of Hermite functions
$\{H_n\}_{n=0}^\infty$ forms an  orthonormal basis in  $L^2(\bR)$. 
 

Their image under the \BT is $\{e_n\}_{n=0}^\infty$  -- the natural orthonormal basis in $\cF$:
\beq
\label{image}
\cB H_n (z) = e_n(z), \ n=0, 1, \ldots \ .
\eeq

(f) In what follows we  identify $\bC$ and $\bR^2$. In particular
for each $\zeta=\xi +i\eta \in \bC$ we write 
$\pi_\zeta=\pi_{(\xi,\eta)}$. 
 Define the shift $\beta_\zeta: \cF \to \cF$ in the \FS\  by
\beq
\label{fock_shift}
 \beta_\zeta F(z) = e^{i\pi \xi \eta} e^{-\pi |\zeta|^2/2}e^{\pi {\zeta} z} F(z-\bar{\zeta}).
 \eeq
Then $\beta _\zeta $ is unitary on $\cF $,  and the   \BT intertwines
the \FS shift and the \TFS: 
\beq
\label{intertwines}
\beta_\zeta \cB = \cB \pi_\zeta.
\eeq

Based on this proposition one can easily obtain inner product of a
function $f\in L^2(\bR)$ with the time-frequency   shifts of a  
Hermite function as in~\cite{BS93}. This quantity is    the
short-time Fourier 
transform with respect to a  Hermite function.

\begin{prop}  
\label{hermite_stft}
Let $f\in L^2(\bR)$ and  $F(z)=\cB f(z)$. Then, for all $\zeta \in \bC$,
\beq
\label{brcmn_stft}
\lng f, \pi_\zeta H_n \rng_{L^2(\bR)}=  
               \frac 1{\sqrt{\pi^n n!}}e^{-i\pi \xi \eta} e^{-\pi |\zeta|^2/2}
                    \sum_{k=0}^n \binom{n}{k}  (-\pi {\zeta})^k F^{(n-k)}(\bar{\zeta}) .
\eeq
\end{prop}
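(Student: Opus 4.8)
The plan is to transfer the computation to the Fock space and then read off the answer from the reproducing kernel. Since the \BT is unitary (Proposition~\ref{unitary}), since it intertwines the two families of shifts, $\beta_\zeta \cB = \cB \pi_\zeta$ by~\eqref{intertwines}, and since it maps $H_n$ to $e_n$ by~\eqref{image}, I would start from
\[
\langle f, \pi_\zeta H_n\rangle_{L^2(\bR)} = \langle \cB f, \cB \pi_\zeta H_n\rangle_\cF = \langle F, \beta_\zeta e_n\rangle_\cF ,
\]
so that the whole statement reduces to evaluating the right-hand side.

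Next I would make $\beta_\zeta e_n$ explicit. Inserting $e_n(w) = (\pi^n/n!)^{1/2} w^n$ from~\eqref{monomials} into the definition~\eqref{fock_shift} of the shift gives
\[
\beta_\zeta e_n(w) = e^{i\pi\xi\eta}\, e^{-\pi|\zeta|^2/2}\Big(\frac{\pi^n}{n!}\Big)^{1/2} e^{\pi\zeta w}(w-\bar\zeta)^n ,
\]
and the binomial theorem expands $(w-\bar\zeta)^n = \sum_{k=0}^n \binom{n}{k}(-\bar\zeta)^k w^{n-k}$. Substituting this into $\langle F,\beta_\zeta e_n\rangle_\cF$ and pulling the scalar factors out of the conjugate-linear second slot of $\langle\cdot,\cdot\rangle_\cF$ (conjugating them in the process: $e^{i\pi\xi\eta}\mapsto e^{-i\pi\xi\eta}$ and $(-\bar\zeta)^k\mapsto(-\zeta)^k$, while the real factors are unchanged) leaves a linear combination of the numbers $\langle F(w), w^{n-k}e^{\pi\zeta w}\rangle_\cF$.

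The one remaining ingredient is the identity $\langle F(w), w^{j}e^{\pi\bar z w}\rangle_\cF = \pi^{-j}F^{(j)}(z)$ for $F\in\cF$. This comes from writing the reproducing formula~\eqref{reproducing} as $F(z) = \int_\bC F(w)e^{\pi z\bar w}e^{-\pi|w|^2}\,dm_w$, in which the dependence on $z$ is holomorphic, and differentiating $j$ times under the integral sign; the differentiation is justified by the growth bound~\eqref{growth_estimate}, which dominates the integrand and all its $z$-derivatives by an integrable function locally uniformly in $z$. Taking $z=\bar\zeta$, so that $e^{\pi\bar z w}=e^{\pi\zeta w}$, yields $\langle F, w^{n-k}e^{\pi\zeta w}\rangle_\cF = \pi^{-(n-k)}F^{(n-k)}(\bar\zeta)$.

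Finally I would collect everything: combining the three steps,
\[
\langle f,\pi_\zeta H_n\rangle_{L^2(\bR)} = e^{-i\pi\xi\eta}e^{-\pi|\zeta|^2/2}\Big(\frac{\pi^n}{n!}\Big)^{1/2}\pi^{-n}\sum_{k=0}^n \binom{n}{k}(-\pi\zeta)^k F^{(n-k)}(\bar\zeta),
\]
and since $(\pi^n/n!)^{1/2}\pi^{-n} = (\pi^n n!)^{-1/2}$ this is precisely~\eqref{brcmn_stft}. There is no genuine obstacle in this argument; the only places that need care are the bookkeeping of the complex conjugates forced by the sesquilinearity of $\langle\cdot,\cdot\rangle_\cF$ and the (routine) justification of differentiating the reproducing formula under the integral sign.
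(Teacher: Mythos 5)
Your proposal is correct and follows essentially the same route as the paper: transfer to the Fock space via unitarity of the Bargmann transform and the intertwining relation, expand $\beta_\zeta e_n$ with the binomial theorem, and identify $\langle F(w), w^{j}e^{\pi\zeta w}\rangle_\cF$ with $\pi^{-j}F^{(j)}(\bar\zeta)$ through differentiation of the reproducing identity \eqref{reproducing}. The only cosmetic difference is that the paper differentiates $\langle F(z), e^{\pi\zeta z}\rangle$ in the parameter $\bar\zeta$ and then invokes \eqref{reproducing}, whereas you differentiate the reproducing integral directly; the bookkeeping of conjugates and constants in your version matches the stated formula.
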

\begin{proof}
  Using the intertwining property~\eqref{intertwines}, we have 
\begin{eqnarray*}
  \lng f, \pi_\zeta H_n \rng_{L^2(\bR)}&=&  \lng F, \beta_\zeta \cB
  H_n \rng_\cF \\
&=&     \Big(\frac {\pi^n}{n!} \Big)^{1/2} \, e^{- i\pi \xi \eta}\,  e^{-\pi |\zeta|^2/2} 
      \lng F(z), e^{\pi {\zeta} z} (z-\bar{\zeta})^n\rangle \\
&=&  \Big  (\frac {\pi^n}{n!} \Big  )^{1/2} e^{-i\pi \xi \eta}
e^{-\pi |\zeta|^2/2}\sum_{k=0}^n \binom{n}{k}   (- {\zeta})^k
\lng F(z), z^{n-k}e^{\pi {\zeta} z}\rangle      \\
&=&   \Big (\frac {\pi^n}{n!} \Big )^{1/2} e^{-i\pi \xi \eta} e^{-\pi |\zeta|^2/2}\sum_{k=0}^n \binom{n}{k} (-  {\zeta})^k  \pi^{-n+k} \frac{d^{n-k}}{d\bar{\zeta}^{n-k}}  \lng F(z),e^{\pi {\zeta} z}\rng.       
\end{eqnarray*}
  It remains to apply relation  \eqref{reproducing}.
  \end{proof}
\smallskip
 
Finally we give in this section a description of the space  $M^1$ in
the Bargmann transform  terms.  Since $|\langle f , \pi _{\bar{z}} H_0
\rangle | = |\mathcal{B}f(z)| \, e^{-\pi |z|^2/2}$
(e.g.,\cite{Charlybook} or \cite{Folland}), we obtain
the following.

\begin{prop}
\label{p_one_barg}
A function $f\in L^2(\bR)$  belongs to $M^1$ if and only if
its Bargmann transform $F(z)=\cB f$ satisfies
\beq
\label{m_one_barg}
\|f\|_{M^1} = \int_\bC |F(z)|e^{-\pi |z|^2 / 2} dm_z < \infty.
\eeq
\end{prop}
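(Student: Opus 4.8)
The plan is to unwind the definition of $M^1$ through the Bargmann transform by recognizing that the defining integral $\|f\|_{M^1} = \int_{\bR^2} |\langle f, \pi_z \varphi\rangle|\,dz$ is, up to a trivial exponential weight, an integral of the modulus of $F = \cB f$ against Gaussian measure. The key identity is the one already quoted in the text just before the statement: since $\varphi$ is (a dilate of) $H_0$ and $H_0$ has Bargmann transform $e_0 \equiv 1$, Proposition~\ref{hermite_stft} with $n=0$ gives $\langle f, \pi_\zeta H_0\rangle = e^{-i\pi\xi\eta} e^{-\pi|\zeta|^2/2} F(\bar\zeta)$, hence $|\langle f, \pi_\zeta H_0\rangle| = |F(\bar\zeta)|\, e^{-\pi|\zeta|^2/2}$. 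Thus the value of the short-time Fourier transform with the Gaussian window is literally the modulus of the Bargmann transform, damped by the Gaussian factor $e^{-\pi|\zeta|^2/2}$.

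The steps are then as follows. First I would note that the normalization of $\varphi(t) = 2^{d/4} e^{-\pi t^2}$ in $d=1$ is exactly $2^{1/4} e^{-\pi t^2}$, which is precisely the $L^2$-normalized Gaussian $H_0$ appearing in~\eqref{bt} and~\eqref{HFS}; so $\varphi = H_0$ and no dilation correction is needed. Second, I would substitute the identity $|\langle f, \pi_z H_0\rangle| = |F(\bar z)|\, e^{-\pi|z|^2/2}$ into the definition~\eqref{m_one} of $\|f\|_{M^1}$, obtaining
\beqs
\|f\|_{M^1} = \int_{\bR^2} |F(\bar z)|\, e^{-\pi |z|^2/2}\, dm_z .
\eeqs
Third, the change of variables $z \mapsto \bar z$ is measure-preserving on $\bR^2$ and leaves $|z|^2$ unchanged, so this equals $\int_{\bC} |F(z)|\, e^{-\pi|z|^2/2}\, dm_z$, which is~\eqref{m_one_barg}. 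The ``if and only if'' is then immediate: the right-hand side is finite exactly when $\|f\|_{M^1} < \infty$, i.e. exactly when $f \in M^1$, by the very definition of membership in the Feichtinger algebra.

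There is no real obstacle here; the statement is essentially a restatement of a standard fact, and the only points requiring a word of care are (i) checking that the Gaussian window used to define $M^1$ coincides with $H_0$ up to the normalization already fixed in the paper (so that Proposition~\ref{hermite_stft} applies verbatim with $n=0$), and (ii) observing that since $M^1$ is defined by the \emph{finiteness} of the integral in~\eqref{m_one}, and that integral has just been shown to equal the integral in~\eqref{m_one_barg}, the equivalence and the norm identity hold simultaneously. One could alternatively cite \cite{Charlybook} or \cite{Folland} directly for $|\langle f, \pi_z H_0\rangle| = |\cB f(z)|\, e^{-\pi|z|^2/2}$, as the text already does, and skip invoking Proposition~\ref{hermite_stft}.
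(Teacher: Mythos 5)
Your proposal is correct and follows essentially the same route as the paper, which also reduces the statement to the identity $|\langle f,\pi_{\bar z}H_0\rangle| = |\cB f(z)|\,e^{-\pi|z|^2/2}$ (quoted from \cite{Charlybook}, \cite{Folland}, or obtainable from Proposition~\ref{hermite_stft} with $n=0$) and then observes that \eqref{m_one_barg} is just a reformulation of \eqref{m_one}. Your extra checks of the normalization $\varphi=H_0$ and the measure-preserving substitution $z\mapsto\bar z$ are fine and only make explicit what the paper leaves implicit.
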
  
This is just a reformulation of condition \eqref{m_one}.

\subsection{Estimates of $\sigma$-function}
\label{estimates}

 In this section we collect  definitions and known facts
 about  the Weierstrass functions.  They will be used in the next section.

 
 Given two numbers   $\omega_1, \omega_2 \in \bC $ such that 
  \beq
 \label{positive}
 \Im  \Big (  \omega_2/\omega_1 \Big ) >0.
 \eeq
 we consider the  lattice $\Lambda=\{m_1\omega_1+m_2\omega_2; m_1,m_2 \in
 \bZ\}\subset \bC$. The numbers    $\omega_1, \omega_2 \in \bC $ are
 called {\em periods }  of the lattice $\Lambda$. 
 
   In the previous section we used   lattices of  the form
 $\Lambda = A \bZ^2 \subset \bR^2$, where $A$ is an invertible 2$\times$2 matrix.
   The  two constructions coincide after the natural identification of  
$\bR^2$ and $\bC $,  if  $\det A >0$. In this case    $\omega_1, \omega_2$
correspond to the columns of  $A$.  

Let $Q=\{(x,y); 0\leq x <1, 0\leq
y <1\}$. 
The parallelogram $\Pi _\Lambda = AQ$ with based on 
$\omega_1$ and $\omega_2$ is a fundamental domain  for  $\Lambda$
(also called the {\em period    parallelogram}).  
 Its area can be  expressed through the periods as 
\beq \label{area}
s(\Lambda ) = \text{Area}(\Pi _\Lambda)= \det A=  \Im (\bar {\omega}_1\omega_2)
= -\frac {i} 2 (\bar{\omega}_1\omega_2- \bar{\omega}_2\omega_1).
  \eeq

Next  consider the following Weierstrass functions:
 \beq
 \label{Pfunction}
\cP(z)= \frac 1{z^2} + \sum_{\omega \in \Lambda'}   \left \{
      \frac 1 {(z-\omega)^2} - \frac 1 {\omega^2}  \right \},
\eeq
\beq 
\label{zeta}
\zeta(z)= \frac 1 z + \sum_{\omega \in \Lambda'} 
     \left \{ \frac 1 {z-\omega} + \frac 1 \omega + \frac z {\omega^2 } \right \},
\eeq
and
\beq
\label{sigma}
\sigma(z)= z \prod_{\omega \in \Lambda'}  \left ( 1 - \frac z \omega \right )
    e^{\frac z \omega + \frac {z^2}{2\omega^2}},
\eeq
where  $\Lambda'= \Lambda \setminus \{0\}$. The lattice  
$\Lambda$ plays a special role for the Weierstrass functions: $\cP$  has poles
of order two precisely on $\Lambda $,  $\zeta$ has  poles
of order one, and  $\sigma$ has simple zeros precisely on $\Lambda $.

We list some  basic properties of the Weierstrass  functions. See,
e.g.,  \cite{Ahiezer} for detailed proofs.

\begin{itemize}
\item 
The $\cP$-function  is  elliptic, i.e., 
\beq
\label{P1}
\cP(z)=\cP(z+\omega_k), \quad z\in \bC, \ k=1,2.
\eeq
\item 
The function $\zeta$ changes by a constant when its argument changes by a lattice point:
\beq
\label{eta}
\eta_k:=\zeta(z+\omega_k)- \zeta(z)= \const, \ k=1,2.
\eeq
\item The constants $\eta_k$ 
satisfy the  Legendre relation:
\beq
\label{legendre}
\eta_1 \omega_2 - \eta_2 \omega_1 = 2\pi i .
\eeq
\item
The Weierstrass $\sigma$-function  is quasi-periodic, it satisfies the
following relation  
\beq
\label{sirel}
\sigma(z+\omega_k)= - \sigma(z) e^{\eta_kz + \frac 12 \eta_k \omega_k}
\qquad k=1,2 \, .
\eeq
\end{itemize}

\medskip
In order to understand  the growth properties  of the Weierstrass
$\sigma $-function in dependence of the lattice 
$ \Lambda$, we follow an elegant argument of Hayman~\cite{Hayman}. He   realized
that, after a proper normalization and growth
compensation, the  absolute value of $\sigma_\Lambda $ becomes a doubly
periodic function.  We include   here  this arguments  in order to make our presentation self-contained.
 \begin{prop}
 \label{lemmasigma}
 Set
 \beq
 \label{sconstants}
 \alpha(\Lambda)=  \frac{i\pi}{\bar{\omega}_1\omega_2- \bar{\omega}_2\omega_1}=
 \frac{\pi}{2 s(\Lambda)},
 \quad
 a(\Lambda)= \frac 12 \frac {\eta_2\bar{\omega}_1-\eta_1 \bar{\omega}_2}
                      {\omega_1\bar{\omega}_2-\omega_2 \bar {\omega}_1},
\eeq
and 
\beq
\label{sigmaa}
\sigma_\Lambda(z)= \sigma(z)e^{a(\Lambda)z^2}.
\eeq
Then the function $|\sigma_\Lambda(z)|e^{-\alpha(\Lambda) |z|^2}$ is  periodic  with periods
$\omega_1$ and $\omega_2$.
\end{prop}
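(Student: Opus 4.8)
The plan is to use Hayman's normalization device: show directly that the quasi-periodicity factor of $\sigma(z)$ from \eqref{sirel}, once corrected by the Gaussian-type factors $e^{a(\Lambda)z^2}$ and $e^{-\alpha(\Lambda)|z|^2}$, has absolute value identically $1$ along each of the two periods. As a preliminary I would record that $\alpha(\Lambda)$ is real and positive: by \eqref{area} one has $\bar\omega_1\omega_2-\bar\omega_2\omega_1 = 2i\,\Im(\bar\omega_1\omega_2) = 2is(\Lambda)$, so $\alpha(\Lambda) = \pi/(2s(\Lambda))$ as claimed in \eqref{sconstants}. This reality of $\alpha$ will be used twice below.

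Next, fix $k\in\{1,2\}$ and compute $\sigma_\Lambda(z+\omega_k)$. From \eqref{sirel} and the expansion $a(z+\omega_k)^2 = az^2 + 2a\omega_k z + a\omega_k^2$ we get
\[
\sigma_\Lambda(z+\omega_k) = -\sigma_\Lambda(z)\,\exp\!\big((\eta_k + 2a\omega_k)z + \tfrac12\eta_k\omega_k + a\omega_k^2\big),
\]
while $|z+\omega_k|^2 = |z|^2 + \omega_k\bar z + \bar\omega_k z + |\omega_k|^2$. Writing $\phi(z) := |\sigma_\Lambda(z)|\,e^{-\alpha|z|^2}$, and taking absolute values, this yields
\[
\phi(z+\omega_k) = \phi(z)\,\exp\!\Big(\Re\big((\eta_k+2a\omega_k)z + \tfrac12\eta_k\omega_k + a\omega_k^2\big) - \alpha\big(\omega_k\bar z + \bar\omega_k z + |\omega_k|^2\big)\Big).
\]
Thus it remains only to show that the exponent vanishes identically in $z$ for both $k=1$ and $k=2$.

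Then I would separate the exponent into its $z$-dependent part and its constant part. Since $\alpha$ is real, $\Re(\alpha(\omega_k\bar z + \bar\omega_k z)) = \alpha(\omega_k\bar z + \bar\omega_k z)$, so the $z$-dependent part vanishes for all $z$ if and only if $\eta_k + 2a\omega_k = 2\alpha\bar\omega_k$. Treating this as a pair of equations in the single unknown $a$: multiplying the $k=1$ equation by $\bar\omega_2$ and the $k=2$ equation by $\bar\omega_1$ and subtracting eliminates $\alpha$ and gives exactly $a = a(\Lambda)$ as in \eqref{sconstants}; the system is consistent because eliminating $a$ instead (multiply by $\omega_2$, resp.\ $\omega_1$, and subtract) produces $\eta_1\omega_2 - \eta_2\omega_1 = 2\alpha(\bar\omega_1\omega_2-\bar\omega_2\omega_1)$, which, after substituting $\alpha = \pi/(2s(\Lambda))$ and $\bar\omega_1\omega_2-\bar\omega_2\omega_1 = 2is(\Lambda)$, is precisely the Legendre relation \eqref{legendre}. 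Finally, with $\eta_k + 2a\omega_k = 2\alpha\bar\omega_k$ in hand, the constant part equals $\Re\!\big(\tfrac12\omega_k(\eta_k+2a\omega_k)\big) - \alpha|\omega_k|^2 = \Re(\alpha|\omega_k|^2) - \alpha|\omega_k|^2 = 0$, again because $\alpha$ is real. Hence $\phi(z+\omega_k) = \phi(z)$ for $k=1,2$.

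The only genuine obstacle here is verifying that the required value of $a$ is the same for $k=1$ and $k=2$; as explained, this is equivalent to Legendre's relation \eqref{legendre}, so it costs nothing beyond that classical identity. Everything else is bookkeeping with the stated quasi-periodicity of $\sigma$ and the reality of $\alpha(\Lambda)$.
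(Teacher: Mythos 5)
Your proof is correct and follows essentially the same route as the paper: apply the quasi-periodicity relation \eqref{sirel} to $\sigma_\Lambda$, reduce periodicity of $|\sigma_\Lambda(z)|e^{-\alpha(\Lambda)|z|^2}$ to the vanishing of the real part of the affine exponent, and verify the linear coefficient identity $\eta_k+2a(\Lambda)\omega_k-2\alpha(\Lambda)\bar\omega_k=0$ and the constant-term identity using \eqref{sconstants} and the Legendre relation \eqref{legendre}. Your organization of the ``explicit calculation'' as a consistency/elimination argument for the two linear equations in $a$ (with Legendre appearing as the compatibility condition, and the constant part following from the linear identity and the reality of $\alpha(\Lambda)$) is a tidy but equivalent way of doing the same computation.
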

 
 \medskip
The periodicity implies a growth estimate for the
modified sigma-function $\sigma _\Lambda$.

\begin{prop}
\label{growth}
  Set 
  \beq
  \label{C}
  c(\Lambda )= \sup _{z\in \Pi _\Lambda }
  |\sigma_\Lambda(z)| e^{-\alpha(\Lambda) |z|^2}. 
  \eeq
   Then 
\beq
\label{estimate}
|\sigma_\Lambda(z)| \leq c(\Lambda ) \, e^{\frac{ \pi}{ 2 s(\Lambda)}|z|^2},
\quad \forall z\in \bC\, .
\eeq
For  each  $\epsilon >0$ we have 
\footnote{Here and in what follows the sign $\asymp$ means that the ratio of the left- and right-
hand sides is located between two positive constants.}
\beq
\label{asymp}
|\sigma_\Lambda(z)|\asymp e^{\alpha(\Lambda ) |z|^2}, \quad \text{whenever dist}(z,\Lambda)>\epsilon.
\eeq
\end{prop}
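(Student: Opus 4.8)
The plan is to derive both assertions from the periodicity established in Proposition~\ref{lemmasigma}. Write $\Phi(z)=|\sigma_\Lambda(z)|\,e^{-\alpha(\Lambda)|z|^2}$; by Proposition~\ref{lemmasigma} this is a nonnegative continuous function on $\bC$ which is $\Lambda$-periodic, so its values are completely determined by its restriction to the closed fundamental parallelogram $\overline{\Pi_\Lambda}$. Since $\overline{\Pi_\Lambda}$ is compact and $\Phi$ is continuous, $c(\Lambda)=\sup_{z\in\Pi_\Lambda}\Phi(z)=\max_{z\in\overline{\Pi_\Lambda}}\Phi(z)<\infty$, and periodicity gives $\Phi(z)\le c(\Lambda)$ for \emph{every} $z\in\bC$. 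Rewriting this as $|\sigma_\Lambda(z)|\le c(\Lambda)\,e^{\alpha(\Lambda)|z|^2}$ and using the identity $\alpha(\Lambda)=\pi/(2s(\Lambda))$ recorded in \eqref{sconstants} (see also \eqref{area}) yields \eqref{estimate}.

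For the two-sided relation \eqref{asymp} the upper bound is exactly \eqref{estimate}, so it remains to produce a constant $c'(\Lambda,\epsilon)>0$ with $\Phi(z)\ge c'(\Lambda,\epsilon)$ whenever $\mathrm{dist}(z,\Lambda)>\epsilon$. Here I would use that the zero set of $\sigma_\Lambda(z)=\sigma(z)e^{a(\Lambda)z^2}$ is precisely $\Lambda$: the exponential factor never vanishes, and $\sigma$ has simple zeros exactly at the lattice points. Hence $\Phi>0$ off $\Lambda$. Both $\Phi$ and the function $z\mapsto\mathrm{dist}(z,\Lambda)$ are $\Lambda$-periodic, so it suffices to bound $\Phi$ from below on $K_\epsilon:=\{z\in\overline{\Pi_\Lambda}:\mathrm{dist}(z,\Lambda)\ge\epsilon\}$. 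If $\epsilon$ exceeds the covering radius of $\Lambda$ then $\{z:\mathrm{dist}(z,\Lambda)>\epsilon\}=\emptyset$ and the claim is vacuous, so assume $K_\epsilon\neq\emptyset$. The only lattice points in $\overline{\Pi_\Lambda}$ are its four corners, all congruent to $0\bmod\Lambda$, so $K_\epsilon$ is a compact set disjoint from $\Lambda$; thus $\Phi$ is continuous and strictly positive on $K_\epsilon$ and attains a positive minimum $c'(\Lambda,\epsilon)$ there. Since $\{\mathrm{dist}(z,\Lambda)>\epsilon\}\subseteq\{\mathrm{dist}(z,\Lambda)\ge\epsilon\}$ and $\Phi$ is periodic, $\Phi(z)\ge c'(\Lambda,\epsilon)$ for all such $z$; combined with $\Phi(z)\le c(\Lambda)$ this gives \eqref{asymp}.

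The whole argument is essentially a compactness argument once Proposition~\ref{lemmasigma} is available, so I do not anticipate a genuine obstacle. The two points that require a little care are (i) justifying the passage from the half-open fundamental domain $\Pi_\Lambda$ to its compact closure without changing the relevant supremum or infimum, which is immediate from continuity together with periodicity, and (ii) correctly identifying the zero set of $\sigma_\Lambda$ with $\Lambda$, which is what guarantees that the minimum of $\Phi$ over $K_\epsilon$ is strictly positive rather than merely nonnegative.
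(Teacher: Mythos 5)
Your proof is correct and follows essentially the same route as the paper: you use the $\Lambda$-periodicity of $|\sigma_\Lambda(z)|e^{-\alpha(\Lambda)|z|^2}$ from Proposition~\ref{lemmasigma}, compactness of the closed fundamental parallelogram, and the fact that $\sigma_\Lambda$ vanishes only on $\Lambda$, exactly as in the paper's (more terse) argument. Your extra care about the half-open domain versus its closure and the vacuous case of large $\epsilon$ only fills in details the paper leaves implicit.
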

   
\begin{proof}  The function 
$|\sigma_\Lambda(z)|e^{-\alpha(\Lambda) |z|^2}$ is bounded in 
$\Pi_\Lambda$ and has its only zeros at the vertices of $\Pi _\Lambda
$. Hence it is bounded away from $0$ on every compact subset of $\Pi
_\Lambda$,  which does not contain its vertices 
and relation \eqref{asymp}  follows now by the
periodicity. 
\end{proof}
\medskip

\noindent {\em Proof of Proposition \ref{lemmasigma}.} It follows from \eqref{sirel} that
\[
\Big |  \sigma(z+\omega_k) e^{a(\Lambda)(z+\omega_k)^2}   \Big |
e^{-\alpha(\Lambda)|z+\omega_k|^2}=
\Big |  \sigma(z) e^{a(\Lambda)z^2}\Big |e^{-\alpha(\Lambda) |z|^2} e^{\Re A_k(z)}, \ k=1,2,
\]
where 
 \beq
\label{ak}
A_k(z)=  z\Big (\eta_k+2a(\Lambda)\omega_k -  
          2\alpha(\Lambda) \bar{\omega}_k\Big )  +
                 \Big ( \frac 12 \eta_k \omega_k  +a(\Lambda)\omega_k^2-
                                    \alpha(\Lambda) |\omega_k|^2 \Big ),   \ k=1,2.
\eeq
An explicit calculation, based on relations \eqref{sconstants}  and \eqref{legendre}
shows that 
\[
\eta_k+2a(\Lambda)\omega_k -  2\alpha(\Lambda) \bar{\omega}_k= 0, \ k=1,2,
\]
and also
\[
\Re \Big (\frac 12 \eta_k \omega_k  +a(\Lambda)\omega_k^2-
        \alpha(\Lambda) |\omega_k|^2\Big )= 0, \
k=1,2.
\] 
Proposition \ref{lemmasigma} now follows.
\hfill $\Box$

\bigskip


\section{Frames with Hermite functions.}

\subsection{Hermite superframes}

In this section we prove  Theorem  \ref{mainchar1}   and thus give a complete
characterization of Gabor superframes with Hermite functions.

We divide the proof into several steps. 
First  we will use the matrix form of the Wexler-Raz  biorthogonality relations
  (\ref{wr_orthogonality}) and  translate these relations  to
  an interpolation problem on  Fock space.

Let, as earlier,  $\boh=(H_j)_{j=0}^n$ be the vector-valued window consisting  of the
first $n+1$ Hermite functions.  
  For each  
$\bga=(\gamma_j)_{j=0}^n\in L^2(\bR,\bC ^{n+1})$ denote
\label{dual_gamma}
\beq G_j=\cB\gamma_j,  \ j=0,1,\ldots \ ,n.
\eeq
Taking $\bog=\boh$ in the biorthogonality relations
\eqref{wr_orthogonality} and using 
Proposition \ref{hermite_stft},   we can rewrite these relations as an
interpolation problem  for functions $G_j\in \cF$:

\begin{eqnarray}
\label{new_relations}
\lng \gamma_j, \pi_\mu H_l\rng_{L^2(\bR)} &=&  e^{-i\pi \Im \mu \Re \mu}e^{-\pi|\mu|^2/2} 
       \frac 1 {\sqrt{\pi^l l!}}\sum_{k=0}^l \binom lk
       (-\pi{\mu})^kG_j^{(l-k)}(\bar{\mu})\\ 
&=&     s(\Lambda)\delta_{\mu,0}\delta_{j,l}, \quad j,l=0,1,\ldots \
              ,n;  \ \mu\in \Lambda^\circ.    \nonumber
\end{eqnarray}

Thus $\cG(\boh, \Lambda)$ is a frame for $L^2(\bR, \bC ^{n+1})$, if
and only if there exist functions   $G_j\in \cF$ satisfying \eqref
{new_relations} and if $\cG (\bga ,\Lambda )$ is a Bessel sequence in
$\L^2(\bR, \bC^{n+1}) $. 

This interpolation problem can be rewritten in a  simpler way. Indeed, 
for each  
$\mu\in \Lambda^\circ\setminus \{0\}$  and    $j\in
\{0,1,\ldots \ ,n\}$, 
the system \eqref {new_relations}   is a triangular linear system in
the variables 
$\{G_j^{(m)}(\bar{\mu})\}_{m=0}^n$ with non-zero diagonal coefficients and
zero right-hand side. Clearly, it has just zero solutions. Thus if
$\boG=(G_j)_{j=0}^n$ satisfies  
\eqref {new_relations}, then $G_j^{(m)}(\bar{\mu})=0, \ $
$j,m=0,1,\ldots  ,n, \ $ $\mu\in \Lambda^\circ\setminus \{0\}$. For
$\mu=0$, the   relations  \eqref {new_relations} take the form 
\[
\frac 1 {\sqrt{\pi^l l !}} G_j^{(l)}(0)= s(\Lambda) \delta_{j,l}, \ j,l=0,1, \ldots \ ,n.
\] 
By adjusting the normalization of the $G_j$,  we obtain the following
statement.

\begin{prop}
\label{delta_prop}
     The Gabor system  $\cG (\mathbf{h} , \Lambda ) $ is a  Gabor superframe for
$L^2(\bR , \bC ^{n+1})$ \fif\ there exist $n+1$ 
 functions $\gamma _j \in \lrd $  such that $\cG(\gamma _j, \Lambda )$ is a
 Bessel sequence and   $G_j=B\gamma _j, j=0, \dots , n$,  satisfy the
 (Hermite) interpolation problem
\begin{equation}
  \label{eq:herm}
G_j^{(\ell )} (\bar{\mu}) = \delta _{\mu ,0} \, \delta _{j,\ell} \qquad
\text{ for } \, \mu \in \Lambda ^{\circ}, j,\ell = 0, \dots, n \, .
\end{equation}
 \end{prop}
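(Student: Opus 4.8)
The plan is to derive Proposition~\ref{delta_prop} as a direct consequence of the vector-valued Wexler-Raz biorthogonality criterion (Proposition~\ref{wexler_raz}) combined with the explicit formula for the short-time Fourier transform with respect to a Hermite function (Proposition~\ref{hermite_stft}). Since the discussion preceding the statement already carries out most of the reduction, what remains is to organize it cleanly and fill in the two places where one must be slightly careful: the passage from the triangular system to the vanishing of \emph{all} derivatives $G_j^{(m)}(\bar\mu)$, and the normalization at $\mu=0$.

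First I would recall that, by Proposition~\ref{wexler_raz}, $\cG(\boh,\Lambda)$ is a frame with dual window $\bga=(\gamma_j)_{j=0}^n$ precisely when $\cG(\bga,\Lambda)$ is a Bessel sequence and the biorthogonality relations $\tfrac{1}{s(\Lambda)}\lng\gamma_l,\pi_\mu H_j\rng = \delta_{0,\mu}\delta_{l,j}$ hold for all $\mu\in\Lambda^\circ$ and $j,l\in\{0,\dots,n\}$. Applying Proposition~\ref{hermite_stft} to each inner product $\lng\gamma_j,\pi_\mu H_l\rng$ with $G_j=\cB\gamma_j$ turns this system into the interpolation conditions \eqref{new_relations}. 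Note that a dual window exists for \emph{some} choice of $\bga$ iff it exists for the canonical dual; thus the existence of $G_j\in\cF$ with $\cG(\bga,\Lambda)$ Bessel and \eqref{new_relations} holding is equivalent to $\cG(\boh,\Lambda)$ being a superframe.

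Next I would carry out the simplification of \eqref{new_relations}. Fix $\mu\in\Lambda^\circ\setminus\{0\}$ and $j\in\{0,\dots,n\}$. Reading the equations for $l=0,1,\dots,n$ in that order, the scalar prefactor $e^{-i\pi\Im\mu\Re\mu}e^{-\pi|\mu|^2/2}(\pi^l l!)^{-1/2}$ is nonzero, and for each $l$ the term with $k=0$ contributes $G_j^{(l)}(\bar\mu)$ with coefficient $1$, while all other terms involve only lower-order derivatives $G_j^{(l-k)}(\bar\mu)$ with $k\ge 1$. Hence the system is lower triangular in the unknowns $G_j^{(0)}(\bar\mu),\dots,G_j^{(n)}(\bar\mu)$ with unit diagonal and zero right-hand side, so by back-substitution $G_j^{(m)}(\bar\mu)=0$ for all $m=0,\dots,n$. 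Conversely, these vanishing conditions obviously imply \eqref{new_relations} at $\mu\ne 0$. For $\mu=0$ the prefactor equals $(\pi^l l!)^{-1/2}$ and again only the $k=0$ term survives (since $\mu=0$ kills all $k\ge 1$ terms), so \eqref{new_relations} reduces to $(\pi^l l!)^{-1/2}G_j^{(l)}(0) = s(\Lambda)\delta_{j,l}$. Replacing $\gamma_j$ by $s(\Lambda)^{-1}(\pi^l l!)^{-1/2}$-adjusted multiples — concretely, rescaling each $G_j$ by an appropriate constant so that the right-hand side becomes $\delta_{j,l}$ — absorbs the factor $s(\Lambda)$ and the combinatorial constants into the normalization, which does not affect the Bessel property. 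This yields exactly the Hermite interpolation problem \eqref{eq:herm}, and since $\overline{\Lambda^\circ}=\Lambda^\circ$ (the adjoint of a real lattice in $\bR^2\cong\bC$ is again such a lattice, so it is invariant under conjugation), we may write the interpolation nodes as $\bar\mu$ or $\mu$ interchangeably.

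I do not anticipate a serious obstacle here: the statement is essentially a bookkeeping reformulation. The only point demanding minor care is ensuring that the rescaling of the $G_j$ is legitimate — i.e., that multiplying a Fock-space function by a nonzero constant corresponds to multiplying $\gamma_j$ by that constant and preserves membership in the relevant function classes and the Bessel condition — and that one tracks the triangular structure correctly so that the implication ``\eqref{new_relations} $\Rightarrow$ all derivatives vanish'' is genuinely reversible. The substantive analytic content (actually \emph{solving} \eqref{eq:herm} via $\sigma$-functions, and verifying the Bessel property of the resulting $\gamma_j$) is deferred to the subsequent parts of Section~4 and is not part of this proposition.
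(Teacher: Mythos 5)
Your proposal is correct and takes essentially the same route as the paper: Wexler--Raz biorthogonality (Proposition~\ref{wexler_raz}) plus the Hermite short-time Fourier transform formula (Proposition~\ref{hermite_stft}) give \eqref{new_relations}, the triangular structure forces $G_j^{(m)}(\bar\mu)=0$ at $\mu\in\Lambda^\circ\setminus\{0\}$, and a $j$-dependent rescaling of each $G_j$ normalizes the conditions at $\mu=0$ to \eqref{eq:herm}. The one blemish is your parenthetical claim that $\overline{\Lambda^\circ}=\Lambda^\circ$, which is false for a general lattice in $\bC$ (e.g.\ $\bZ+\bZ\,(0.3+i)$ is not conjugation-invariant); fortunately you never use it, since both your derivation and the statement \eqref{eq:herm} keep the nodes as $\bar\mu$, and the paper accordingly works with the conjugate lattice $\overline{\Lambda^\circ}$ in the sequel.
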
 

\smallskip

\begin{proof}[Proof of Theorem~\ref{mainchar1}]
  \noindent{\em Sufficiency } of \eqref{main_estimate}.
We use the Weierstrass functions described in  Section \ref{estimates}.
 Construct the Weierstrass functions corresponding the lattice
 $\overline{\Lambda^\circ}$, as  given by \eqref{eq:c2}.  Let the constants
 $\alpha(\overline{\Lambda^\circ}), a(\overline{\Lambda^{\circ}})$ 
 be defined by  \eqref{sconstants}  and the
 function $\sigma_{\overline{\lambda^\circ}}$ be defined by \eqref{sigmaa}, 
 where the $\sigma$-function is constructed for the lattice
 $\overline{\Lambda^\circ}$. Consider the function 
 \beq
 \label{Sfunction}
 S(z)=\big ( \sigma_{\ola}(z)  \big )^{n+1}=
         \sigma   (z)^{n+1}
e^{a(\ola)(n+1)z^2}.
 \eeq
 
 The zero set of $S$ is  $\ola$,  and  the
 multiplicity of each zero is precisely  
 $n+1$. 
In our case $\alpha(\overline{\Lambda^{\circ}})
 =  \frac{\pi}{2 s(\Lambda ^\circ )} =  \frac \pi 2
s(\Lambda)$ and the growth estimates \eqref{estimate} and
\eqref{asymp} can be rewritten as  
 \begin{eqnarray}
\label{Sestimate}
|S(z)| &\leq & e^{\frac \pi 2 (n+1) s(\Lambda)|z|^2}, \  \text{for
  all} \ z\in \bC, \\
   |S(z)| &\asymp &  e^{\frac \pi 2 (n+1) s(\Lambda)|z|^2},
  \quad \text{ if  }   \dist(z,\ola)>\epsilon. 
  \label{Sestimate2}
  \end{eqnarray}

Now assume that $s(\Lambda ) < (n+1)\inv $. Then the functions 
 \beq
 \label{Sj}
 S_m(z) = \frac 1 {z^{n+1-m}} S(z), \ m=0,1,\ldots \ ,n
 \eeq
 belong to $\cF$. They have zero of order $n+1$ at each $\mu \in
 \ola \setminus \{0\}$,  and also 
 \[
 S^{(l)}_m(0)=0, \  \text{for} \ 0\leq l < m-1, \ \text{and} \ S^{(m)}_m(0)\neq 0.
 \]
 
 The solutions $G_j$ to the interpolation problem \eqref{eq:herm} can
 be now found in the form 
 \beq
 \label{solutions}
 G_j=\sum_{m=j}^n c_{m,j}S_m, \,\,\,\,  j=0,1,\ldots \ ,n.
 \eeq
 
  These functions have a zero of multiplicity $n+1$ at all points from 
  $\ola \setminus \{0\}$, while, for each $j$,  the  condition 
  $G_j^{(l)}(0)=\delta_{j,l}$ leads one to a triangular system of linear equations 
  with respect to the coefficients $ \{c_{m,j}\}_{m=j}^n$ with non-zero diagonal entries. 
  Clearly this system has a (unique) solution.
  
It remains to mention that each  system
$\cG(\gamma _j, \Lambda )$ is a  Bessel sequence.  Each $G_j$
inherits its growth from the functions $S_j$ and from $S$, thus 
\eqref{Sestimate} and Proposition~\ref{p_one_barg} imply that $\gamma
_j \in M^1(\bR ) $ for $j=0, \dots , n$. Now apply  Proposition~\ref{bessel}. 
   
  \smallskip
  
  \noindent{\em Necessity.}
 Let now $s(\Lambda)\geq (n+1)^{-1}$. The growth estimate~ 
  \eqref{Sestimate2} implies that 
  \beq
  \label{belowestimate}
   |S(z)| \geq \const  e^{\frac \pi 2  |z|^2},
 \quad \text{whenever} \,  \dist(z,\ola)>\epsilon.
\eeq
 
Now assume that  $\cG(\boh, \Lambda)$ is a frame for $L^2(\bR,\bC
^{n+1})$. Then  there exists a system of functions
$G_0, G_1,\ldots \ ,G_n\in \cF$ which satisfy the interpolation
problem~\eqref{eq:herm}. The function 
$G_n$ has zeros of multiplicity $n+1$ at $\Lambda^\circ\setminus \{0\}$ and
a zero of multiplicity $n$ at the origin. Therefore 
\[
\Phi(z)= \frac {zG_n(z)}{S(z)}
\]
is an entire function. Estimates \eqref{belowestimate}
and~\eqref{growth_estimate}  yield 
\[
|\Phi(z)| = o(|z|), \ z \to \infty, \ \dist(z, \ola) >\epsilon.
\]
By the maximum principle, the  restriction  
$ \dist(z, \Lambda^\circ) >\epsilon$   can be removed,
and $\Phi $ is in fact a bounded entire function. The
Liouville theorem implies that  $\Phi$ is a constant, or 
\[
G_n(z)= C \frac{S(z)}z.
\]
But  $z^{-1}S(z) \not \in \cF $, this follows again from \eqref {belowestimate}. Therefore
$C=0$ and hence $G_n$ is identically zero in contradiction to  \eqref{eq:herm}. This completes the proof of necessity.
\end{proof}

\noindent{\em Remark.} It follows from the density theorem
(Proposition \ref{density} ) that $\cG (\boh , \Lambda )$ cannot be a frame for
$s(\Lambda ) > \frac{1}{n+1}$. Our proof shows that we must also
exclude the case of the critical density $s(\Lambda ) =
\frac{1}{n+1}$. This is  a Balian-Low type phenomenon (see the
review~\cite{BHW95}). We conjecture that  in general, if $g_j \in M^1,
j=1, \dots , n$ and $\cG (\bog , \Lambda )$ is a Gabor frame, then
$s(\Lambda ) < \frac{1}{n}$,   
but we will not pursue this question
further in this work.

\subsection{Hermite frames}
Next  we consider scalar-valued Gabor frames with Hermite
functions.  
 
We remark that if for some $\bog \in L^2(\bR, \bC^{n+1})$ and lattice 
$\Lambda \subset \bR^2$ the system 
 $\cG(\bog, \Lambda)$ is a vector-valued frame in 
$L^2(\bR, \bC^{n+1})$, then, trivially,  for each $j=0,1,\ldots \ ,n$, the Gabor system
$\cG(g_j, \Lambda)=\{\pi_\lambda g_j, \lambda \in \Lambda\}$ is a frame for 
$L^2(\bR)$. More generally, for each 
 $\mathbf{c}\in \bC ^{n+1}$,  $ \mathbf{c}\neq 0$, the system
 $\cG(h, \Lambda)=\{\pi_\lambda h, \lambda \in \Lambda\}$  with 
$h = \sum _{j=0}^n c_j g_j$  also is a frame in  $L^2(\bR)$.
By applying this observation to the window $\bog = (H_0,\ldots \ , H_n)$
we obtain Proposition \ref{simple_hermite}, which gives
a condition for the one dimensional system with the Hermite window to be a frame for  $L^2(\bR)$. Actually a    slightly more general result   related to   linear
combinations of Hermite functions holds true

\begin{prop}
  \label{hermitegeneral}
 Let $n\in \bZ , n\geq 0$ and $h = \sum _{k=0}^n c_k H_k$, \, $\sum_0^n |c_k|\neq 0$.   If
 $s(\Lambda ) < (n+1)^{-1}$,    then
$\cG (h , \Lambda ) $ is a frame for $L^2(\bR )$.
\end{prop}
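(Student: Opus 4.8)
The plan is to mimic the proof of Proposition~\ref{simple_hermite}, but track the highest-order Hermite coefficient appearing in $h$. Write $h=\sum_{k=0}^{n} c_k H_k$ and let $N=\max\{k : c_k\neq 0\}\le n$, so that $c_N\neq 0$. Since $s(\Lambda)<(n+1)^{-1}\le (N+1)^{-1}$, it suffices to prove the statement with $n$ replaced by $N$; in other words we may assume $c_n\neq 0$. First I would translate the frame property of $\cG(h,\Lambda)$ into an interpolation problem on the Fock space, exactly as in the derivation of \eqref{new_relations}: by the scalar Wexler-Raz criterion (Proposition~\ref{WR_scalar}) together with Proposition~\ref{hermite_stft} and the linearity of $\cB$, the system $\cG(h,\Lambda)$ is a frame with dual window $\gamma$ (with $G=\cB\gamma$, $H:=\cB h=\sum_{k=0}^n c_k e_k$) precisely when $G\in\cF$, $\cG(\gamma,\Lambda)$ is Bessel, and
\begin{equation*}
\sum_{l=0}^{n}\frac{c_l}{\sqrt{\pi^l l!}}\sum_{k=0}^{l}\binom{l}{k}(-\pi\mu)^k G^{(l-k)}(\bar\mu)=s(\Lambda)\,\delta_{\mu,0},\qquad \mu\in\Lambda^\circ .
\end{equation*}
For $\mu\neq 0$ this is one linear condition on $G(\bar\mu),\dots,G^{(n)}(\bar\mu)$; it is NOT forced to annihilate all these derivatives, which is the essential difference from the pure-Hermite case. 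For $\mu=0$ it is one normalization $\sum_l c_l (\pi^l l!)^{-1/2}G^{(l)}(0)=s(\Lambda)$.

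The idea for the construction is that it is enough to produce \emph{one} dual window, and the easiest one is obtained by forcing $G$ to vanish to order $n+1$ at every nonzero point of $\Lambda^\circ$ (which certainly kills the condition there) and to have a prescribed small jet at $0$. This is exactly the Hermite interpolation problem \eqref{eq:herm} restricted to a single function: find $G\in\cF$ with $G^{(\ell)}(\bar\mu)=0$ for $\mu\in\Lambda^\circ\setminus\{0\}$, $\ell=0,\dots,n$, and with $G^{(\ell)}(0)$, $\ell=0,\dots,n$, prescribed so that the $\mu=0$ relation holds. As in the proof of Theorem~\ref{mainchar1}, since $s(\Lambda)<(n+1)^{-1}$ the functions $S_m(z)=z^{-(n+1-m)}S(z)$, $m=0,\dots,n$, with $S(z)=\sigma_{\ola}(z)^{n+1}$, lie in $\cF$, vanish to order $n+1$ on $\ola\setminus\{0\}$, and satisfy $S_m^{(\ell)}(0)=0$ for $\ell<m$, $S_m^{(m)}(0)\neq 0$. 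Taking $G=\sum_{m=0}^{n}c_m' S_m$ and solving the resulting triangular linear system for the $c_m'$ gives the required $G$. The growth bound \eqref{Sestimate} passes to each $S_m$ and hence to $G$, so by Proposition~\ref{p_one_barg} the corresponding $\gamma\in M^1(\bR)$, and by Lemma~\ref{bessel} $\cG(\gamma,\Lambda)$ is Bessel (as is $\cG(h,\Lambda)$, since $h\in M^1$ being a finite linear combination of Hermite functions). Proposition~\ref{WR_scalar} then yields that $\cG(h,\Lambda)$ is a frame.

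There is a small subtlety I would address explicitly: after producing $G$ with all the jets at $0$ equal to prescribed values, one still needs the single scalar equation $\sum_l c_l(\pi^l l!)^{-1/2}G^{(l)}(0)=s(\Lambda)$ to be satisfiable, i.e.\ one needs the linear functional $(\xi_0,\dots,\xi_n)\mapsto\sum_l c_l(\pi^l l!)^{-1/2}\xi_l$ to be nonzero on the space of achievable jets — but since $c_n\neq 0$ and the jets $(G^{(0)}(0),\dots,G^{(n)}(0))$ range over all of $\bC^{n+1}$ as the $c_m'$ vary (the map is triangular and invertible), this is automatic; simply scale the solution $G$ so that the normalization holds. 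The main obstacle, as in the original argument, is not the linear algebra but making sure the constructed $G$ genuinely lies in $\cF$ and that $\gamma$ lies in $M^1$ — this is precisely where the strict density inequality $s(\Lambda)<(n+1)^{-1}$ is used, via the sharp growth estimate \eqref{Sestimate} for $\sigma_{\ola}$ from Proposition~\ref{growth}; at the critical density $z^{-(n+1-m)}S(z)$ would only miss $\cF$ by a logarithmic/borderline amount and the construction would fail, consistent with the expected Balian--Low phenomenon. Note finally that we do \emph{not} need a converse here: Proposition~\ref{hermitegeneral} is a one-directional sufficient condition, so no necessity argument (and no appeal to the density theorem) is required.
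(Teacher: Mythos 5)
Your proof is correct, but it takes a different route from the paper. The paper obtains Proposition~\ref{hermitegeneral} as an immediate corollary of the superframe characterization (Theorem~\ref{mainchar1}): if $\cG(\mathbf{g},\Lambda)$ is a vector-valued frame, then for any nonzero $\mathbf{c}\in\bC^{n+1}$ the scalar system generated by $h=\sum_j c_j g_j$ is a frame, since testing the superframe inequality on vectors of the form $\bof=(\bar c_0 f,\dots,\bar c_n f)$ gives $\langle\bof,\pi_\lambda\boh\rangle=\langle f,\pi_\lambda h\rangle$ and $\|\bof\|^2=\|\mathbf{c}\|^2\|f\|^2$; no new interpolation is needed. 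You instead rerun the scalar Wexler--Raz machinery directly: you write the biorthogonality for $h$ via Proposition~\ref{hermite_stft} (note it should involve $\bar c_l$, which is harmless since $\mathbf{c}$ is arbitrary), oversolve it by demanding that $G$ vanish to order $n+1$ on $\ola\setminus\{0\}$, and use the triangular jet map of the functions $S_m$ to hit the single normalization at $0$ --- in effect extracting the one-function part of the paper's $(n+1)\times(n+1)$ construction; your observation that any nonzero functional on the jet space can be normalized even makes the reduction to $c_N\neq0$ superfluous. Both arguments rest on the same $\sigma$-function growth estimates, and your verification of membership in $\cF$ and $M^1$ (hence the Bessel property via Lemma~\ref{bessel}, with $h\in M^1$ as a finite Hermite combination) is sound. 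What your route buys is independence from the vector-valued theorem and an explicit dual window for $\cG(h,\Lambda)$ with $M^1$ and Gaussian-type control, in the spirit of \eqref{Eq:3.2}--\eqref{eq:dualcc} and Lemma~\ref{norms}; what the paper's route buys is brevity, since once Theorem~\ref{mainchar1} is proved the proposition follows in two lines.
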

 
 \noindent{\em Remark.} Clearly the estimate above is not optimal in case 
 $c_n=0$.
 \smallskip

Amazingly enough the sufficient density of Theorem~\ref{hermitegeneral} might be
sharp, as is suggested by the following counter-example.

\begin{prop}
  \label{counter}
If $\Lambda = a \bZ \times b \bZ $ and $ab = 1/2$, then 
$\cG (H_{2n+1}, \Lambda )$ is not a Gabor frame for $L^2(\bR)$ for all
integers $n\geq 0$. 
\end{prop}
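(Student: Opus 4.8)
The plan is to translate the frame property, at the special density $s(\Lambda)=ab=1/2$, into a pointwise condition on a Zak transform of $H_{2n+1}$, and then exploit the fact that $H_{2n+1}$ is odd to exhibit a zero that is incompatible with any positive lower frame bound.

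First I would split the lattice. Since $ab=1/2$, the sublattice $\Gamma_0=a\bZ\times a^{-1}\bZ$ lies inside $\Lambda=a\bZ\times b\bZ$ with index $2$ (because $a^{-1}=2b$), it has critical density, and $\Lambda=\Gamma_0\cup\big(\Gamma_0+(0,b)\big)$. Hence, up to unimodular factors, $\cG(H_{2n+1},\Lambda)$ is the union of the two critical-density systems $\cG(H_{2n+1},\Gamma_0)$ and $\cG(M_bH_{2n+1},\Gamma_0)$, where $M_bh(t)=e^{2\pi ibt}h(t)$; that is, $\cG(H_{2n+1},\Lambda)$ is a two-window Gabor system at the critical density. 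Passing to the Zak transform $Z_a$ at parameter $a$ (which turns shifts along $\Gamma_0$ into modulations in the Zak variables), a Parseval computation in the Zak variables (cf.\ \cite{Charlybook}, Ch.~8, and its multi-window variant) shows that $\cG(H_{2n+1},\Lambda)$ is a frame for $\lr$ if and only if
\[
m(x,\omega):=|Z_aH_{2n+1}(x,\omega)|^2+|Z_a(M_bH_{2n+1})(x,\omega)|^2
\]
is bounded above and bounded below by a positive constant for a.e.\ $(x,\omega)\in[0,1)^2$. A short computation with the quasi-periodicity of $Z_a$ gives $|Z_a(M_bh)(x,\omega)|=|Z_ah(x,\omega-\tfrac12)|$ (the modulation $M_b$ with $ab=1/2$ amounts to a half-period shift of the Zak frequency variable), so the criterion becomes: $\cG(H_{2n+1},\Lambda)$ is a frame iff $A\le|Z_aH_{2n+1}(x,\omega)|^2+|Z_aH_{2n+1}(x,\omega-\tfrac12)|^2\le B$ a.e.

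Next I would evaluate the two Zak values at the corner $(x,\omega)=(0,0)$. From the definition, $Z_aH_{2n+1}(0,0)=a^{1/2}\sum_{k}H_{2n+1}(-ak)$ and $Z_aH_{2n+1}(0,-\tfrac12)=a^{1/2}\sum_{k}(-1)^kH_{2n+1}(-ak)$. Because $H_{2n+1}$ is odd, $H_{2n+1}(0)=0$, and in each sum the terms indexed by $k$ and $-k$ cancel (inserting the factor $(-1)^k$ does not change this, since $(-1)^{-k}=(-1)^k$); hence both Zak values vanish, so $m(0,0)=0$. On the other hand, $H_{2n+1}$ is a Schwartz function, so $Z_aH_{2n+1}$ is continuous and therefore so is $m$; a continuous function that is bounded below a.e.\ by $A>0$ is bounded below everywhere by $A$, contradicting $m(0,0)=0$. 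Therefore $\cG(H_{2n+1},\Lambda)$ is not a frame for $\lr$.

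The only real work is bookkeeping: tracking the unimodular prefactors in the lattice splitting, fixing the exact form of the (multi-window) Zak criterion and the normalization of $Z_a$, and checking the half-period shift produced by $M_b$. The conceptual point, which needs no computation, is that for an odd window the Zak transform and its half-period translate are both forced to vanish at $(0,0)$, so the two ``sheets'' of the density-$2$ system share a common zero there; this is exactly what fails for the Gaussian, whose Zak transform vanishes only at the cell center $(\tfrac12,\tfrac12)$, and it is why the obstruction is special to odd Hermite windows at density $2$.
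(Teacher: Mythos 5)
Your proof is correct and takes essentially the same route as the paper: at the density $(ab)^{-1}=2$ the frame operator is diagonalized by the Zak transform $Z_a$, and the oddness of $H_{2n+1}$ forces the resulting continuous multiplier to vanish at a point, which is incompatible with a positive essential lower bound. The only difference is cosmetic: you derive the criterion via the modulation-coset splitting, obtaining the multiplier $|Z_aH_{2n+1}(x,\omega)|^2+|Z_aH_{2n+1}(x,\omega-\frac12)|^2$ with its zero at the origin, whereas the paper quotes relation (8.21) of \cite{Charlybook} in the translated-window form $|Z_aH_{2n+1}(x,\xi)|^2+|Z_aH_{2n+1}(x-\frac a2,\xi)|^2$ and uses the two zeros $Z_aH_{2n+1}(0,0)=Z_aH_{2n+1}(\frac a2,0)=0$.
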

\begin{proof} 
We use a Zak transform argument. For $a>0$ the Zak transform is defined as 
\beq
\label{zak}
Z_a f(x,\xi)=\sum_{k=-\infty}^\infty f(x-ak)e^{2i\pi a k \xi}.
\eeq
We refer the reader to  \cite[Sec.~8.3]{Charlybook} for a detailed
discussion of  its properties.   In particular it is a unitary mapping
between  $L^2(\bR)$ 
 and $L^2(Q_{a,1/a})$ -- space of square summable functions in the  rectangle
 $Q_{a,1/a}=[0,a)\times[0,1/a)$.
For the case $ab=1/2$ (more generally for the case $(ab)^{-1}\in \bZ$)
the Gabor   frame operator is unitarily equivalent to a
multiplication operator  on $
L^2(Q_{a,1})$ by means of  the Zak transform. Precisely, 
\[
Z_a S_{H_n,H_n} f(x,\xi) =
     \big ( |Z_aH_n(x, \xi)|^2 + |Z_a H_n (x-\frac a2, \xi)|^2 \big ) 
               Z_af(x,\xi), \quad f \in   L^2(\bR).
\]                  
Consequently  $\cG (H_n, \Lambda )$ is a frame for  $L^2(\bR)$ \fif\
\begin{multline*}
 \quad 0< \inf_{Q_{a,1}} \{ ( |Z_aH_n(x, \xi)|^2 + |Z_a H_n (x-\frac
 a2, \xi)|^2 \big )  \}   \\ 
\leq \sup_{Q_{a,1}} \{ ( |Z_aH_n(x, \xi)|^2 + |Z_a H_n (x-\frac a2, \xi)|^2 \big )  \} <\infty,
         \quad 
 \end{multline*}
 see relation (8.21) in~\cite{Charlybook}. 

Clearly,   $Z_aH_n(x,\xi)$ is 
 a continuous function.  Since $H_{2n+1}$ is an odd function, its Zak
 transform   satisfies $Z_aH_{n+1}(0,0)=Z_aH_{n+1}(\frac a2, 0)=0$. This
 contradicts  the above criterium, therefore $\cG (H_{2n+1}, \Lambda
 )$ cannot be a frame.  
 \end{proof}

In~\cite[Prop.~3.3]{glyu} we showed that $\mathcal{G}(H_n, \Lambda )$ is a
frame, if and only if there exists a $\gamma_n \in L^2(\bR )$, such
that $\mathcal{G}(\gamma _n, \Lambda )$ is a Bessel system and the
Bargmann transform $G_n = \mathcal{B}\gamma _n$ is in $\mathcal{F}$ and  satisfies the interpolation
problem
\begin{equation}
  \label{Eq:3.2}
  \sum _{k=0}^n   \binom{n}{k} \,
         (-\pi \mu )^k G_n^{(n-k)} (\bar{\mu })   = \delta _{\mu , 0} \, \qquad
         \,\, \forall \mu \in \Lambda ^\circ \, .
\end{equation}
 
Note that \eqref{Eq:3.2} coincides with   \eqref{new_relations} for $j=l=n$
and that for $s(\Lambda ) < 1/(n+1)$ the function
\begin{equation}
  \label{eq:dualcc}
  G_n(z) = S_n(z) = c z^{-1}\sigma_{\overline{\Lambda^\circ}}^{n+1}(z)
\end{equation}
is a solution of \eqref{Eq:3.2} in $\mathcal{F}$. 

If $s(\Lambda ) \geq 1/(n+1)$, then we do not know whether
\eqref{Eq:3.2} has any solution in $\mathcal{F}$. The difficulty is
that this interpolation problem is not entirely holomorphic, and
the standard complex variable methods do not seem sufficient to investigate this
problem. In the light of Proposition~\ref{counter} it is conceivable
that the sufficient condition  $s(\Lambda) < 1/(n+1) $ in
Proposition~\ref{hermitegeneral} is also necessary.

\subsection{Norms and Decay of the Dual Window}

In the rest of this section we assume that $ s(\Lambda)<\frac 1 {n+1}$,
and  we estimate  the $L^2$ and $M^1$-norms of the
dual window for the frame  
$\cG (H_n , \Lambda )$.  Though the estimates are  simple, they  seem
to be new even for the Gaussian case  $n=0$.

 \begin{lemma} \label{norms}
Assume that $ (n+1)s(\Lambda)<1$ and let $\gamma _n$ be the dual
window with Bargmann transform $G_n$ defined in~\eqref{eq:dualcc}. 
Setting  $\kappa = \frac{1-(n+1)s(\Lambda)}{3-(n+1)s(\Lambda)}$, then 
\begin{equation}
  \label{eq:cne2}
    |\gamma_n(t)|+ |\widehat{\gamma _n}(t)| \leq C e^{-\pi \kappa t^2}
    \, . 
\end{equation}
Furthermore, 
\beq
\label{m_one_norm}
\|\gamma\|_{M_1} \leq  4  c({\overline{\Lambda ^\circ}})^{n+1}  \frac{
  1}{1-(n+1)s(\Lambda   )} , 
\eeq  
and
 \beq
\label{focknorm} 
\|\gamma _n\|_{L^2(\bR)}^2 \asymp    \log
\frac{1}{1-(n+1)s(\Lambda )}, 
\eeq
where $c(\Lambda )  $ is defined in~\eqref{C}.
\end{lemma}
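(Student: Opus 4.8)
The plan is to exploit the explicit formula $G_n(z) = c\, z^{-1}\sigma_{\overline{\Lambda^\circ}}(z)^{n+1}$ for the Bargmann transform of the dual window, together with the sharp two-sided growth bounds \eqref{Sestimate}, \eqref{Sestimate2} for $S = \sigma_{\overline{\Lambda^\circ}}^{n+1}$, which give $|S(z)| \le e^{\frac{\pi}{2}(n+1)s(\Lambda)|z|^2}$ everywhere and $|S(z)| \asymp e^{\frac{\pi}{2}(n+1)s(\Lambda)|z|^2}$ away from $\overline{\Lambda^\circ}$. Write $\beta = (n+1)s(\Lambda) < 1$ throughout. First I would pin down the normalizing constant $c$: since $G_n^{(n)}(0) = \delta_{n,n}$ after the renormalization in Proposition~\ref{delta_prop}, and $z^{-1}S(z)$ has a zero of order $n$ at the origin with a computable leading coefficient (coming from $\sigma(z) \sim z$ near $0$ and the factor $e^{a(\overline{\Lambda^\circ})(n+1)z^2}$), one gets $|c| \le \text{const}\cdot c(\overline{\Lambda^\circ})^{n+1}$ — in fact $c$ is controlled by the value of $S$ near the origin, which by \eqref{C} is at most $c(\overline{\Lambda^\circ})^{n+1}$ times a mild factor. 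This is the only place the constant $c(\overline{\Lambda^\circ})$ enters.

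For the $M^1$-bound \eqref{m_one_norm}, I would use Proposition~\ref{p_one_barg}: $\|\gamma_n\|_{M^1} = \int_{\bC} |G_n(z)| e^{-\pi|z|^2/2}\, dm_z$. Split $\bC$ into the disk $|z| \le 1$ and its complement. On $|z| > 1$ use $|G_n(z)| = |c|\,|z|^{-1}|S(z)| \le |c|\,|z|^{-1} e^{\frac{\pi}{2}\beta|z|^2}$, so the tail integral is bounded by $|c| \int_{|z|>1} |z|^{-1} e^{-\frac{\pi}{2}(1-\beta)|z|^2}\, dm_z$, which in polar coordinates is $2\pi |c| \int_1^\infty e^{-\frac{\pi}{2}(1-\beta)r^2}\, dr \le \text{const}\cdot |c|/(1-\beta)$ (the $r^{-1}\cdot r = 1$ cancellation is what makes this clean). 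On $|z| \le 1$ the integrand is bounded by $|c| \sup_{|z|\le 1}|z^{-1}S(z)|$, a constant independent of $\Lambda$ in the relevant regime, absorbed into $|c|$. Collecting terms and inserting $|c| \lesssim c(\overline{\Lambda^\circ})^{n+1}$ yields \eqref{m_one_norm}; the constant $4$ is a matter of being slightly generous in the elementary estimates.

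For the pointwise decay \eqref{eq:cne2}, I would use the inversion of the Bargmann transform together with the reproducing-kernel bound: from $G_n \in \mathcal{F}$ and $|G_n(z)| \le |c|\,|z|^{-1}e^{\frac{\pi}{2}\beta|z|^2}$ one has, via $|\gamma_n(t)| = |\langle \gamma_n, \pi_{\bar z}H_0\rangle|\cdot e^{\pi|z|^2/2}$-type identities applied to the window itself (equivalently, estimating $\gamma_n(t) = \mathcal{B}^{-1}G_n(t)$ through the integral formula dual to \eqref{bt}), a Gaussian bound $|\gamma_n(t)| \le C e^{-\pi\kappa t^2}$ where the exponent is dictated by the gap $\frac{1}{2}(1-\beta)$ between the growth of $G_n$ and the critical Fock growth $e^{\pi|z|^2/2}$. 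The precise value $\kappa = \frac{1-\beta}{3-\beta}$ comes from optimizing a Gaussian convolution: writing the inverse transform as an integral of $G_n$ against $e^{-\pi z^2/2}e^{2\pi t z}$ over a vertical line and completing the square, the residual decay rate in $t$ is $\frac{1-\beta}{2}\big/\big(1 + \frac{1-\beta}{2} + \cdots\big)$, which simplifies to $\frac{1-\beta}{3-\beta}$. Since $\widehat{\gamma_n}$ corresponds under Bargmann to $G_n$ composed with $z \mapsto iz$ (rotation invariance of $\mathcal{F}$ and of the growth bound), the same estimate applies to $\widehat{\gamma_n}$, giving the stated bound on $|\gamma_n(t)| + |\widehat{\gamma_n}(t)|$.

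For the $L^2$-asymptotics \eqref{focknorm}, unitarity of $\mathcal{B}$ gives $\|\gamma_n\|_2^2 = \|G_n\|_{\mathcal{F}}^2 = |c|^2 \int_{\bC} |z|^{-2}|S(z)|^2 e^{-\pi|z|^2}\, dm_z$. Using the two-sided bound \eqref{Sestimate2}, $|S(z)|^2 \asymp e^{\pi\beta|z|^2}$ outside $\epsilon$-neighborhoods of $\overline{\Lambda^\circ}$ (and the contributions from those neighborhoods are uniformly bounded since $S$ has zeros there), the integral is $\asymp \int_{\bC} |z|^{-2} e^{-\pi(1-\beta)|z|^2}\, dm_z \asymp \int_{1}^{\infty} r^{-1}e^{-\pi(1-\beta)r^2}\, dr + \text{(bounded near-origin piece)}$. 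As $\beta \to 1$ the integral $\int_1^\infty r^{-1}e^{-\pi(1-\beta)r^2}\,dr$ behaves like $\frac{1}{2}\log\frac{1}{1-\beta}$ (substitute $u = (1-\beta)r^2$), and $|c|^2$ stays bounded above and below, so $\|\gamma_n\|_2^2 \asymp \log\frac{1}{1-\beta}$. The main obstacle is the bookkeeping near $\overline{\Lambda^\circ}$ and near the origin — one must check that the zeros of $S$ (and the pole cancellation by $z^{-1}$) neither blow up nor kill the logarithmic divergence, i.e.\ that the divergence genuinely comes from the far-field region where the clean $\asymp e^{\pi\beta|z|^2}$ estimate holds; this is where \eqref{asymp} and the uniformity of $c(\overline{\Lambda^\circ})$ for $\beta$ bounded away from $0$ do the work, but one must be careful that all implied constants can be taken independent of $\Lambda$ in a neighborhood of the critical density.
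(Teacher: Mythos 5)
Your proposal is correct and follows essentially the same route as the paper: the $M^1$ and $L^2$ estimates via Proposition~\ref{p_one_barg}, unitarity of the Bargmann transform and the growth bounds \eqref{Sestimate}--\eqref{Sestimate2} in polar coordinates (with the far-field region producing the $\log\frac{1}{1-(n+1)s(\Lambda)}$ divergence), and the Gaussian decay by inverting against Gaussian time-frequency shifts, where the convolution semigroup for Gaussians yields exactly $\kappa=\frac{\rho}{2+\rho}=\frac{1-(n+1)s(\Lambda)}{3-(n+1)s(\Lambda)}$. The only small slip is your justification for the normalizing constant $c$: an upper bound on $S$ near the origin cannot bound $c$ from above (one needs a lower bound on the $n$-th derivative of $z^{-1}S$ at $0$), but the expansion $\sigma(z)=z+O(z^5)$ gives that derivative as exactly $n!$, so $c$ is harmless and essentially lattice-independent, which is what the paper implicitly uses.
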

\begin{proof} 
The estimates involve a routine calculation with Gaussians and are a
consequence of the growth estimate~\eqref{estimate} of Proposition~\ref{growth}.
Set $\rho = 1-(n+1)s(\Lambda )$. 

For the decay property, we use the inversion formula for the
short-time Fourier transform (e.g.,~\cite[Prop.~3.2.3]{Charlybook})
which states
$$
\gamma _n(t) = \int _{\bC } \langle \gamma _n , \pi _z H_0\rangle \,
\pi _z H_0(t) \, dm_z
$$
with absolute convergence of the integral for each $t\in\bR $. Since
by \eqref{estimate} 
$$
|\langle \gamma _n , \pi _{\overline{z}} H_0\rangle| =
|\mathcal{B}\gamma _n(z) |  e^{-\pi |z|^2/2} \leq C e^{-\pi \rho
  |z|^2/2}
$$
and since $\pi _{x+i\xi} H_0(t) = e^{2\pi i \xi t} H_0(t-x)$, we obtain
that
$$
|\gamma _n (t) | \leq C \int _{\bR } e^{-\pi \rho \xi ^2/2}\, d\xi \int _{\bR
} e^{-\pi \rho x ^2/2} e^{-\pi (t-x)^2} \, dx \, .
$$ 
The convolution of the two Gaussians in this integral is a multiple of
the Gaussian $e^{-\pi \frac{\rho}{2+\rho}t^2}$ by the semigroup
property of Gaussians with respect to
convolution~\cite[Lemma~4.4.5]{Charlybook}.

The $M^1$-norm of  $\gamma _n$ is readily estimated by 
\begin{eqnarray*}
  \|\gamma _n\|_{M^1} &=& \int _{\bC } |G_n(z) | \, e^{-\pi 
    |z|^2/2}\, dm_z \\
&\leq & c(\overline{\Lambda ^\circ})^{n+1}  \int _{\bC } \frac{1}{|z|}
\, e^{-\pi \rho     |z|^2/2}\, dm_z  \\
&=& 2\pi c(\overline{\Lambda ^\circ})^{n+1} \int _0^\infty  e^{-\pi
  \rho r/2}\, dr = 4 c(\overline{\Lambda ^\circ})^{n+1}
\frac{1}{\rho}\, .
\end{eqnarray*}

For the estimate of $\|\gamma _n \|_2^2$ we have similarly
\[
\|\gamma\|_{L^2(\bR)}^2 = \|G\|^2_\cF =
\int_{\bC} |z|^{-2} |\sigma_{\overline{\Lambda^\circ}} (z)|^{2(n+1)} e^{-\pi |z|^2} dm_z.
\]
When $|z|\leq 1$  the integrand is bounded uniformly, 
 so this 
part does not bring essential contribution into the whole norm. It
follows now from  
\eqref{asymp} that
\begin{multline*}
\|\gamma\|_{L^2(\bR)}^2 \asymp 
\int_{|z|>1}|z|^{-2} e^{-\pi \rho  |z|^2 } dm_z  \asymp
 \int_1^\infty  \frac 1 r e^{-\pi \rho r^2} dr 
    \asymp  - \log (1-(n+1)s(\Lambda ))\, . 
 \end{multline*}
This is the desired inequality.
\end{proof}
\noindent{\em Remarks.} 1. In general, the dual window $\gamma _n$ is not
the canonical dual window (given by $S_{H_n,H_n}\inv
H_n$). Janssen~\cite{janssen96} observed that for $n=0$ and $\Lambda = a\bZ
\times (aN)\inv \bZ $ with a positive integer $N$ the canonical dual
window decays at most exponentially, whereas the dual window
constructed by means of the $\sigma $-function possesses Gaussian
decay.       

2. The Gaussian decay~\eqref{eq:cne2} of the dual window of
$\mathcal{G}(H_n, \Lambda )$ is usually expressed by saying that
$\gamma _n$ belongs to the Gelfand-Shilov space $S^{1/2}_{1/2}$. This
is considered the smallest reasonable space of test functions in
time-frequency analysis. An important consequence of this observation
is the existence of universal Gabor frames, i.e., the frame
$\mathcal{G}(H_n, \Lambda )$  is a (Banach) frame for all reasonable
modulation spaces. We refer to~\cite{gro07c} for a precise statement
and further details.

\vspace{ 3mm}

Finally let us briefly describe how the lower frame bound of the  Gabor
frame $\mathcal{G}(H_0, \Lambda )$ with Gaussian window behaves, when
the  lattice approaches  the critical size $s(\Lambda ) =1$.
Since the constants in Lemma~\ref{norms} depend on (the excentricity
of) the lattice $\Lambda $,   we fix a lattice $\Lambda $ with size
$s(\Lambda )=1$ and study the 
behavior of lower frame bound of $\mathcal{G}(H_0, q \Lambda )$,
as $q $ tends to $1$. As long as $q <1$,
$\mathcal{G}(H_0, q \Lambda )$ is a frame by the classical results
in~\cite{lyu,sw}, however, when $q  =1$, then
$\mathcal{G}(H_0, q \Lambda )$ cannot be a frame by the Balian-Low
theorem~\cite{BHW95}.  The upper frame bound can be controlled
uniformly with Proposition~\ref{bessel}, therefore    the lower frame
bound must $A$ 
converge to $0$ as $q \to 1$. 

\begin{prop}
  Assume that $s(\Lambda )=1$ and $q   <1$. 
   Let  $A_q$ denote the optimal lower
  frame bound of   $\mathcal{G}(H_0,q \Lambda )$. Then 
$$
A_q \geq c (1-q ^2)^2 = c (1-s(q \Lambda ))^2
$$ 
for some constant independent of $q $. 
\end{prop}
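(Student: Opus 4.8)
The plan is to combine Corollary~\ref{lowerbound} with the $M^1$-norm estimate~\eqref{m_one_norm} of Lemma~\ref{norms}, applied to the scaled lattice $q\Lambda$ in place of $\Lambda$. Since $\mathcal{G}(H_0, q\Lambda)$ is a frame with dual window $\gamma_0$ whose Bargmann transform is $G_0(z) = c z^{-1}\sigma_{\overline{(q\Lambda)^\circ}}(z)$, and since $\gamma_0 \in M^1(\bR)$ by the growth estimate, Corollary~\ref{lowerbound} gives
\[
A_q \geq n(q\Lambda)\inv \, \|\gamma_0\|_{M^1}^{-2}\,.
\]
So the task reduces to two separate estimates: an upper bound on $n(q\Lambda)$ that is uniform for $q$ in a neighbourhood of $1$ (this is immediate, since $q\Lambda$ ranges over a compact family of lattices of size between, say, $1/4$ and $1$, so $n(q\Lambda) \leq \const$), and an upper bound on $\|\gamma_0\|_{M^1}$ of the right order as $q \to 1$.

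For the $M^1$-norm, the estimate~\eqref{m_one_norm} with $n=0$ reads
\[
\|\gamma_0\|_{M^1} \leq 4\, c(\overline{(q\Lambda)^\circ})\,\frac{1}{1-s(q\Lambda)}\,,
\]
and $s(q\Lambda) = q^2 s(\Lambda) = q^2$, so $1 - s(q\Lambda) = 1 - q^2$. The remaining point is that the prefactor $c(\overline{(q\Lambda)^\circ})$, defined in~\eqref{C} as a supremum of $|\sigma_{\overline{(q\Lambda)^\circ}}(z)| e^{-\alpha |z|^2}$ over the period parallelogram, stays bounded as $q \to 1$. Indeed $\overline{(q\Lambda)^\circ} = q\inv \overline{\Lambda^\circ}$ depends continuously on $q$, the period parallelogram and the constants $\alpha(\overline{(q\Lambda)^\circ})$, $a(\overline{(q\Lambda)^\circ})$ vary continuously, and the $\sigma$-function depends continuously (uniformly on compacta) on the lattice; hence $c(\overline{(q\Lambda)^\circ}) \leq C_0$ for all $q$ in a fixed neighbourhood of $1$. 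Plugging these in yields $\|\gamma_0\|_{M^1} \leq C (1-q^2)\inv$, and therefore $A_q \geq \const \cdot (1-q^2)^2$, which is the claim.

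The only genuinely delicate point is the uniform boundedness of $c(\overline{(q\Lambda)^\circ})$ as $q \to 1$; everything else is bookkeeping. One must be a little careful because the period parallelogram itself moves with $q$, but since all the data ($\omega_1, \omega_2$ of $q\inv\overline{\Lambda^\circ}$, hence $\eta_k$, $\alpha$, $a$) are real-analytic in $q$ near $q=1$ and the $\sigma$-function converges locally uniformly, a compactness argument over $q \in [1-\delta, 1]$ suffices; alternatively one can fix the reference lattice and use the scaling relation $\sigma_{q\inv\Lambda'}(z) = q\inv \sigma_{\Lambda'}(qz)$ to transfer everything to a single $\sigma$-function evaluated on a $q$-dependent but bounded region. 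Either way the constant is finite and the proof is complete.
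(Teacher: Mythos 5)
Your argument is correct and follows essentially the same route as the paper: Corollary~\ref{lowerbound} combined with the $M^1$-estimate of Lemma~\ref{norms} applied to $q\Lambda$, uniform control of $n(q\Lambda)$, and boundedness of $c(\overline{(q\Lambda)^\circ})$ near $q=1$. The scaling relation $\sigma_{q\inv\Lambda}(z)=q\inv\sigma_{\Lambda}(qz)$ that you mention as an alternative is in fact exactly how the paper handles the last point.
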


\begin{proof}
Let $\gamma _0$ be the dual window defined by ~\eqref{eq:dualcc}.  
 Corollary~\ref{lowerbound}  of  the
  Wexler-Raz relations  and Lemma~\ref{norms} imply that 
  $$
A_q \geq (n(q \Lambda ) \|\gamma _0\|_{M^1}^2)\inv \geq 
 \Big(16 n(q \Lambda) c(\overline{(q \Lambda )^\circ})^2 \Big)\inv
 (1-s(q \Lambda ))^2
$$
Thus we need to show that the two constants $n$ and $c$ are bounded, 
as long as $q $ is bounded away from $0$, $q \geq 1/2$, say.
Clearly the constant $n(q \Lambda )$, which measures the maximal number
of lattice points in a unit cube (Lemma~\ref{bessel}), can be bounded uniformly. 
As for $c$, which is the supremum of the $\sigma $-function over the
fundamental parallelogram~\eqref{C}, we note that 
$(q \Lambda )^\circ= q \inv \Lambda ^\circ$ 
and that 
$\sigma _{q \inv  \Lambda }(z) = q\inv \sigma _{\Lambda} (q z)$ by
\eqref{sigma}, \eqref{legendre} and \eqref{sconstants}. Consequently,
$\sup _{1/2\leq q \leq 1} c(q\inv \overline{\Lambda ^\circ})
< \infty $, and we are done. 
\end{proof}

\section{Appendix}

In this appendix we show how the structural results about Gabor
superframes can be derived from the corresponding well-known results
for scalar Gabor frames. 

\begin{proof}[Proof of Janssen's representation, Theorem~\ref{janssen_prop}]
  We look at the $l$-th component of $S\bof$. Using definition 
\eqref {foperator} it can be written explicitly as
 \[
 (S\bof)_l= \sum_{\lambda \in \Lambda} \lng \bof, \pi_\lambda \bog \rng
                               \pi_\lambda \bga _l =
  \sum_{j=1}^n    \sum_{\lambda \in \Lambda}  \lng f_j, \pi_\lambda g_j \rng
                               \pi_\lambda \gamma_l =
\sum_{j=1}^n S^\Lambda_{g_j,\gamma_l}f_j.
\]                               
If all $g_j$, $\gamma_j$ are in $M^1$, then Janssen's representation~\eqref{janssen}
can be applied to each of the frame-type operators occurring in the
above sum and we obtain that  
\beq
(S\bof)_l= s(\Lambda)^{-1}\sum_{\mu \in \Lambda^\circ} \sum_{j=1}^n
\lng \gamma_l, \pi_\mu g_j\rng \pi_\mu f_j= 
 \sum_{\mu \in \Lambda^\circ} \big ( \Gamma(\mu) \pi_\mu \bof \big )_l,               
\eeq
as was to be shown.
\end{proof}

\smallskip

\begin{proof}[Proof of the Wexler-Raz biorthonality conditions,
  Prop.~\ref{wexler_raz}] 
 We remark first that \tfs s on a lattice are linearly
independent in the following sense:  if $c=(c_\mu )_{\mu \in \Lambda ^\circ } \in
\ell ^\infty (\Lambda ^\circ )$ and $\sum _{\mu \in \Lambda ^\circ}
c_m \pi (\mu ) = 0$ as an operator from $M^1$ to $(M^1)^* $, then
$c_\mu = 0$ for all $\mu \in \Lambda ^\circ $. See, e.g., \cite{gro07a,
rieffel88}.  
If $S_{\mathbf{g}, \mathbf{\gamma}} = \mathrm{I}$, then by Janssen's
representation 
$$
f_{\ell } = (S\mathbf{f})_{\ell} =  s(\Lambda)\inv \, \sum
       _{\mu \in \Lambda ^\circ } \sum _{j=1}^n \langle \gamma _{\ell } , \pi _\mu
       g_j\rangle \pi _\mu f_j \, ,
$$
and so the linear independence forces $ s(\Lambda)\inv \langle \gamma
_{\ell } , \pi _\mu 
       g_j\rangle = \delta _{\mu , 0} \delta _{j,\ell }$, or in short
       notation, 
$s(\Lambda )\inv \Gamma (\mu ) = \delta _{\mu ,0} \mathrm{I}$. 

Conversely, if the biorthogonality condition $s(\Lambda )\inv \Gamma
(\mu ) = \delta _{\mu ,0} \mathrm{I}$  holds, then obviously
$S_{\mathbf{g}, \mathbf{\gamma}} = \mathrm{I}$. 
\end{proof}
\medskip

\begin{proof}[Proof of the Ron-Shen duality, Theorem~\ref{ron-shen}]
Assume first that  $\cG (\mathbf{g}, \Lambda )$ is a frame for 
$L^2(\bR^d, \bC ^n)$. Then the
canonical dual is given by $\bga ^\circ  = S\inv \mathbf{g}$, and by
general frame theory $\bga  ^\circ $ satisfies $S_{\mathbf{g},
 \bga ^\circ} = \mathrm{I}$. Furthermore, $\cG (\bga ^\circ ,
\Lambda )$ is again a frame for 
$L^2(\bR^d, \bC^n)$,  in particular it is a Bessel
sequence. If 
\begin{equation}
  \label{eq:vv14}
  f= \sum _{j=1}^n \sum _{\mu \in  \Lambda ^\circ }
c_{j,\mu  } \pi _\mu g_j
\end{equation} for some sequence $(c_{j,\mu })
\in \ell ^2(\{1, \dots , n\}\times \Lambda )$, 
then by
the Bessel property of $\cG (g_j, \Lambda ^\circ)$ for each $j$, we obtain 
that 
$$
\|f \|_2^2 = \|\sum _{j=1}^n \sum _{\mu \in \Lambda ^\circ }
c_{j,\mu } \pi _\mu g_j \| \leq  B \sum _{j=1} ^n \sum
_{\mu  \in \Lambda ^\circ } |c_{j,\mu }|^2_2 = B \|c\|_2^2 \, .
$$
For the converse inequality we use the Wexler-Raz relations. If $f \in
\lrd $ is given by \eqref{eq:vv14}, then the coefficients are uniquely
determined by 
$$
c_{j,\mu } = \langle f, \pi _\mu \gamma _j \rangle  \, .
$$
Again, since $\cG (\gamma _j, \Lambda ^\circ )$ possesses the Bessel
property, we find that $\|c\|_2^2 \leq A' \|f\|_2^2$. Altogether we
have shown that the set $\bigcup _{j=1}^n \cG (g_j , \Lambda ^\circ )$ is
a Riesz sequence. 

Conversely, assume that  $\bigcup _{j=1}^n \cG (g_j , \Lambda ^\circ )$
is a Riesz sequence that generates a subspace $\cK \subseteq \lrd
$. Then there exists a biorthogonal basis $\{e_{j,\mu }\}$ contained
in $\cK$. By the invariance of the Gabor systems $\cG (g_j, \Lambda
^\circ)$, we find that the biorthogonal basis must be of the form
$e_{j,\mu } = \pi _\mu \gamma _j$ for some $\gamma _j \in \lrd $. By
the general properties of Riesz bases, $\cG (\gamma _j, \Lambda ^\circ
)$ is a Bessel sequence for each $j$, and, after some rescaling, the biorthogonality states
that $s(\Lambda )\inv \, \langle \pi (\mu ')\gamma _j, \pi _\mu g _\ell \rangle =
\delta _{\mu,\mu '} \delta _{j,\ell }$. According to the Wexler-Raz
relations, this implies that $\cG (\mathbf{g}, \Lambda )$ is a frame
for $L^2(\bR ^d, \bC ^n) $. 
\end{proof}


 \end{document}